\newtheorem{theorem}{Theorem}[section]
\newtheorem{lemma}[theorem]{Lemma}
\newtheorem{proposition}[theorem]{Proposition}
\newtheorem{corollary}[theorem]{Corollary}
\newtheorem*{theorem*}{Theorem}
\newtheorem*{lemma*}{Lemma}
\newtheorem*{proposition*}{Proposition}
\newtheorem*{corollary*}{Corollary}
\theoremstyle{definition}
\newtheorem{remark}[theorem]{Remark}
\newtheorem*{remark*}{Remark}
\newcommand{\URA}[2]{\mathrm{rd}({#1})^{\wp}_{\scriptscriptstyle{#2}}}
\newcommand{\euO}{\mathfrak O}
\newcommand{\eus}{\mathfrak s}
\newcommand{\caO}{\mathcal O}
\newcommand{\caM}{\mathcal M}
\newcommand{\caL}{\mathcal L}
\newcommand{\bZ}{\mathbb Z}
\newcommand{\bF}{\mathbb F}
\newcommand{\id}{\mathrm{id}} 
\newcommand{\Tr}{\mathrm{Tr}} 
\newcommand{\Gal}{\mathrm{Gal}}
\newcommand{\chr}{\mathrm{char}}
\newcommand{\sep}{\mathrm{sep}} 
\newcommand{\df}{g\nu} 
\newcommand{\gQ}{Q_8}
\newcommand{\gD}{D_8} 
\newcommand{\gH}{H(p^3)}
\newcommand{\gM}{M(p^3)}
\title[Upper ramification sequences]{Upper ramification sequences of nonabelian extensions of degree $p^3$ in characteristic $p$}
\author{G.~Griffith Elder}
\address{Department of Mathematics \\
University of Nebraska at Omaha \\
Omaha, Nebraska, U.S.A.}
\email{elder@unomaha.edu}
\keywords{Ramification, Hasse-Arf Theorem}
\subjclass[2010]{11S15}
\date{\today} 
\begin{document}

\begin{abstract}
 We classify the upper ramification breaks of totally ramified
 nonabelian extensions of degree $p^3$ over a local field of
 characteristic $p>0$. We find that nonintegral upper ramification
 breaks can occur for each nonabelian Galois group of order $p^3$, except the dihedral group of
 order $8$.
\end{abstract}

\maketitle

\section{Introduction}
Let $K$ be a local field and let $L/K$ be a finite Galois extension.
The Hasse-Arf theorem states that if $\Gal(L/K)$ is abelian, the
upper ramification breaks of $L/K$ are integers.  But what if $\Gal(L/K)$ is
nonabelian?  At least for small nonabelian extensions, the status of such basic
invariants should be well-understood.

In this paper, we give a complete description of the sequence of upper
ramification breaks for totally ramified nonabelian extensions of
degree $p^3$ over a local field of characteristic $p$.  We find that
the upper ramifications breaks are integers for dihedral extensions
but that for the other nonabelian groups there are extensions with
nonintegral upper ramification breaks.  In \cite{elder:hooper}, the
author and J. Hooper addressed this classification for quaternion
extensions over a local field of characteristic $0$ and residue
characteristic $2$ that contains the fourth roots of unity.

For abelian extensions, 
the sequence of ramification breaks are well understood due
to the work of several authors, including Miki and Thomas \cite{miki,thomas}.

\subsection{Nonabelian groups of order $p^3$}
\begin{equation}\label{gps}\arraycolsep=1.4pt\def\arraystretch{1.5}
\begin{array}{lrcl}
p=2:&\gQ&=&\langle\sigma_1,\sigma_2:\sigma_1^4=1,\sigma_1^2=\sigma_2^2=[\sigma_1,\sigma_2]\rangle,\\

&\gD&=&\langle\sigma_1,\sigma_2:\sigma_1^4=\sigma_2^2=1,\sigma_1^2=[\sigma_1,\sigma_2]\rangle,\\
p>2:&\gH&=&\langle\sigma_1,\sigma_2:\sigma_1^p=\sigma_2^p=[\sigma_1,\sigma_2]^p=1,[\sigma_1,\sigma_2]\in Z(\gH)\rangle,\\
&\gM&=&\langle\sigma_1,\sigma_2:\sigma_1^{p^2}=\sigma_2^p=1,
\sigma_1^p=[\sigma_1,\sigma_2]\rangle.
\end{array}\end{equation}
In all cases, the nonabelian group $G$ of order $p^3$ is
generated by two elements denoted by $\sigma_1,\sigma_2$, whose
commutator
$\sigma_3=[\sigma_1,\sigma_2]=\sigma_1^{-1}\sigma_2^{-1}\sigma_1\sigma_2$
generates the group's center $Z(G)$.
\begin{remark}\label{group-pres}
  Replacing $\sigma_1$ with $\sigma_1'=\sigma_1\sigma_2^{-a}$, $a\in \bZ$
  does not change any of these group presentations.
\end{remark}
The first two groups are recognizable as the quaternion and dihedral group, respectively. The third group is the Heisenberg group modulo $p$ and can be expressed in terms of matrices
\[\gH\cong\left\{\begin{bmatrix}
1&a&b\\
0&1&c\\
0&0&1\end{bmatrix}: a,b,c\in \bF_p\right\}\]
with entries in the field of $p$ elements. The fourth group can also be expressed in terms of matrices
\[\gM\cong\left\{\begin{pmatrix} a&b\\0&1\end{pmatrix}:a,b\in \bZ/(p^2), a\equiv 1\bmod p\right\}.\]
When $p=2$, note that the presentations of $\gM$ and $\gD$ agree.

\subsection{Local fields}

Throughout this paper,  $K$ is a field of characteristic $p$
that is complete with respect to a discrete valuation and has a perfect residue field.  Let $K^{\sep}$ be a separable closure of $K$, and for
each finite subextension $L/K$ of $K^{\sep}/K$ let $v_L$ be the
valuation on $K^{\sep}$ normalized so that $v_L(L^{\times})=\bZ$.  Let
$\caO_L$ denote the ring of integers of $L$, let $\caM_L$ denote the
maximal ideal of $\caO_L$, and let $\pi_L$ be a uniformizer for $L$.
Let $\bF_p$ be the field with $p$ elements.

\subsection{Ramification breaks}
We specialize the material in \cite[Chapter IV]{serre:local} to
our situation where $L/K$ is a totally ramified, Galois
extension of degree $p^3$. Define the lower ramification subgroups
$G_i\leq G=\Gal(L/K)$ by
\[G_i=\{\sigma\in \Gal(L/K): v_L(\sigma(\pi_L)-\pi_L)\geq i+1\}.\]
A lower ramification break occurs at $b$, if $G_b\supsetneq G_{b+1}$.
[Other authors may use ``jump,'' ``jump number'' or ``break number''
  rather than ``break''.]  Since the extension is totally ramified and
$G$ is a $p$-group, the lower ramification breaks are positive
integers coprime to $p$. If $[G_b:G_{b+1}]=p^m$, $m\geq 1$, we say
that $b$ occurs with multiplicity $m\geq 1$. Thus there are 3 breaks
$l_1\leq l_2\leq l_3$ in the lower ramification sequence.  The upper
ramification breaks $u_1\leq u_2\leq u_3$ are related to the lower
breaks by: $u_1=b_1$ and $u_i-u_{i-1}=(b_i-b_{i-1})/p^{i-1}$ for
$i\in\{2,3\}$.  The upper ramification groups $G^x$ for $0<x$ are
defined by setting $G^x=G_{b_1}=G$ for $x\leq u_1$, $G^x=G_{b_i}$ for
$u_{i-1}<x\leq u_i$ for $i=2,3$, and $G^x=\{\id\}$ for $u_3<x$.  When
passing from the ramification filtration of a Galois group $G$ to the
ramification filtration of a subgroup $H$, one uses the lower
ramification breaks; namely, $H_i=G_i\cap H$. The upper breaks are
used when passing to the ramification filtration of a quotient group
$G/H$; namely, $(G/H)^i=(G^iH)/H$.
\subsection{Special polynomials}
The Weierstrass $\wp$-function, $\wp(X)=X^p-X\in \bZ[X]$, is a $\bF_p$-linear map. Recall
from the theory of Witt vectors, the  Witt polynomial:
\[S(X_1,X_2)=\frac{X_1^p+X_2^p-(X_1+X_2)^p}{p}\in\bZ[X_1,X_2].\]

\subsection{Artin-Schreier extensions}\label{AS}
In characteristic $p$, cyclic extensions $L/K$ of
degree $p$ are Artin-Schreier. Thus $L=K(x)$
for some $x\in K^\sep$ such that $x^p-x=\kappa$ where $\kappa \in K$. We will refer to $\kappa$ as the
{\em
  Artin-Schreier generator} (AS-generator). As
explained in Remark \ref{technicalities}, we may replace $\kappa$ with
any $\kappa'\in \kappa+K^\wp$ where $K^\wp=\{\wp(k):k\in K\}$ without
changing the extension $L/K$. Thus, since $K$ is a local field, we may
assume, as we do in \S\ref{arith}, that the AS-generator $\kappa$ is
{\em reduced}; that is, $v_K(\kappa)=\max\{v_K(k):k\in\kappa+K^\wp\}$.
If $L/K$ is totally ramified, then $b:=-v_K(\kappa)=-v_L(x)>0$
is the ramification break of $L/K$, which is coprime to $p$.
For cyclic extensions of degree $p$ the upper and lower ramification breaks agree.

\subsection{Main Results}\label{results}
First, we describe the 
extensions in terms of its AS-generators. This result does not require $K$ to be a local field.

\begin{theorem}\label{as-gen}
  Let $K$ be a field of characteristic $p>0$.  An extension $L/K$ is a
  Galois extension with $\Gal(L/K)\cong \gQ, \gD,\gH$
  or $\gM$ if and only if $L=K(x_1,x_2,x_3)$ where
\[
  x_1^p-x_1=\kappa_1,\qquad
  x_2^p-x_2=\kappa_2,\qquad
  x_3^p-x_3=\eus(x_1,x_2)+\kappa_3,\]
  for some $\kappa_i\in K$ such that $\kappa_1,\kappa_2$
  represent $\bF_p$-linearly independent cosets of $K/K^\wp$, and
\[\eus(x_1,x_2)=-\kappa_2x_1+\begin{cases}
\kappa_1x_1+\kappa_2x_2 & \Gal(L/K)\cong \gQ,\\
\kappa_1x_1 & \Gal(L/K)\cong \gD,\\
0& \Gal(L/K)\cong \gH,\\
S(x_1,\kappa_1)& \Gal(L/K)\cong \gM.
\end{cases}\]
Note that $S(x_1,\kappa_1)=\kappa_1x_1$ for $p=2$.

Furthermore, the Galois group
$\Gal(L/K)=\langle\sigma_1,\sigma_2,\sigma_3\rangle$ is determined by
\[(\sigma_i-1)x_j=\delta_{ij},\]
the Kronecker delta function,
for all pairs $(i,j)$ with $1\leq i,j\leq 3$ except two:
$(i,j)=(1,3)$ and $(2,3)$.
For $(i,j)=(1,3)$, we have
\[(\sigma_1-1)x_3=-x_2+\begin{cases}
x_1 & \Gal(L/K)\cong \gQ,\\
x_1 & \Gal(L/K)\cong \gD,\\
0& \Gal(L/K)\cong \gH,\\
S(x_1,1)& \Gal(L/K)\cong \gM.
\end{cases}\]
For 
$(i,j)=(2,3)$, we have
\[(\sigma_2-1)x_3=\begin{cases}
x_2& \Gal(L/K)\cong \gQ,\\
0 & \Gal(L/K)\cong \gD,\gH,\gM.
\end{cases}\]
\end{theorem}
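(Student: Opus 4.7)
The plan is to prove the biconditional by direct construction and identification. For the forward (``if'') direction, I first use that $\kappa_1,\kappa_2$ are $\bF_p$-linearly independent mod $K^\wp$ to get that $M:=K(x_1,x_2)$ is a Galois $(\bZ/p)^2$-extension of $K$, with $\Gal(M/K)$ generated by $\sigma_1,\sigma_2$ acting by $(\sigma_i-1)x_j=\delta_{ij}$ for $i,j\in\{1,2\}$. Setting $\theta:=\eus(x_1,x_2)+\kappa_3$, I verify by direct calculation that $\sigma_i\theta-\theta=\wp(y_i)$ for the specific $y_i\in M$ given in the theorem as $(\sigma_i-1)x_3$; this shows $L=M(x_3)$ is Galois over $K$ and lifts $\sigma_1,\sigma_2$ to $L$ with the claimed formulas. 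I then define $\sigma_3$ on $L$ by $(\sigma_3-1)x_j=\delta_{3j}$ and identify the group by computing $[\sigma_1,\sigma_2]$, $\sigma_1^p$, and $\sigma_2^p$ on the three generators $x_1,x_2,x_3$, matching with the presentations in \eqref{gps}. For the $\gM$ case, the key identity $S(x_1+\kappa_1,1)-S(x_1,1)=\wp(S(x_1,1))$, which follows from $x_1^p=x_1+\kappa_1$ together with $S(a^p,b^p)=S(a,b)^p$ in characteristic $p$, makes the calculation work.

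For the reverse (``only if'') direction, let $G=\Gal(L/K)$ be one of the four groups. The commutator subgroup coincides with the center and has order $p$; call it $\langle\sigma_3\rangle$. The fixed field $M=L^{\langle\sigma_3\rangle}$ is the maximal abelian subextension, and in all four cases $G/\langle\sigma_3\rangle\cong(\bZ/p)^2$, so $M$ is the compositum of two linearly disjoint Artin--Schreier extensions of $K$. I choose Artin--Schreier generators $x_1,x_2$ of these two subfields so that $x_i^p-x_i=\kappa_i\in K$ and $(\sigma_i-1)x_j=\delta_{ij}$ for $i,j\in\{1,2\}$; linear independence of $\kappa_1,\kappa_2$ mod $K^\wp$ is automatic from linear disjointness. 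Since $L/M$ is cyclic of degree $p$, I write $L=M(x_3)$ with $x_3^p-x_3=\theta\in M$, and the task becomes to reduce the class of $\theta$ in $M/M^\wp$ to the form $\eus(x_1,x_2)+\kappa_3$.

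For this reduction, I expand $\theta$ in the $K$-basis of $M$ consisting of monomials $x_1^ax_2^b$, $0\le a,b\le p-1$, and reduce modulo $M^\wp$ by collapsing higher-degree terms via $x_i^p=x_i+\kappa_i$. The Galois constraints $\sigma_i\theta-\theta\in M^\wp$ restrict the possibilities; then the specific group relations --- namely $\sigma_1^p\in\{\id,\sigma_3\}$, $\sigma_2^p=\id$, and $[\sigma_1,\sigma_2]=\sigma_3$ --- imposed on the lifts $\sigma_ix_3=x_3+y_i$ (with $\wp(y_i)=\sigma_i\theta-\theta$), pin down $\theta$ modulo $M^\wp$ and yield the four distinct expressions for $\eus$. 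The main obstacle is the $\gM$ case, where $\sigma_1^p=\sigma_3$ forces the $p$-fold iterated lift $\sigma_1^px_3-x_3=\sum_{i=0}^{p-1}\sigma_1^iy_1$ to equal $1$; producing this nontrivial accumulation requires $y_1$ to contain the Witt term $S(x_1,1)$ (so that $\theta$ contains $S(x_1,\kappa_1)$), and making this step rigorous is essentially a local manifestation of Artin--Schreier--Witt theory for a cyclic $\bZ/p^2$-extension.
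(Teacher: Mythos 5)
Your second half (the explicit equations imply a Galois extension with the stated group) follows the paper's route: verify $(\sigma_i-1)\wp(x_3)\in M^\wp$ to get that $L/K$ is Galois (the paper's Lemma~\ref{converse}), then identify the group by computing $\sigma_1^p$, $\sigma_2^p$ and $[\sigma_1,\sigma_2]$ on $x_3$. Two small cautions there: the identity you call key, $S(x_1+\kappa_1,1)-S(x_1,1)=\wp(S(x_1,1))$, is true (it is Frobenius applied to the integral polynomial $S$), but it is not by itself what the verification needs; you must also relate $(\sigma_1-1)S(x_1,\kappa_1)=S(x_1+1,\kappa_1)-S(x_1,\kappa_1)$ to that quantity (via the polynomial identity $S(X+1,Y)-S(X,Y)=S(X+Y,1)-S(X,1)$), and separately you need $\Tr_{L/K}S(x_1,1)=1$ to get $(\sigma_1^p-1)x_3=1$ in the $\gM$ case. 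These are exactly the two identities the paper states and uses.

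The genuine gap is in the other direction, which is the hard half of the theorem. Your plan --- expand $\theta=\wp(x_3)$ in the monomial basis $x_1^ax_2^b$ and let the conditions $(\sigma_i-1)\theta\in M^\wp$ together with the group relations ``pin down $\theta$ modulo $M^\wp$'' --- is not an argument as it stands: those conditions only locate $\theta$ in the $\Gal(M/K)$-invariants of $M/M^\wp$, which classifies \emph{all} degree-$p^3$ Galois extensions containing $M$ (abelian ones included), and the real content is showing that for each of the four groups the class of $\theta$ can be normalized to the specific representative $\eus(x_1,x_2)+\kappa_3$. The paper does this not by manipulating $\theta$ directly but by normalizing the cocycle $A=(\sigma_1-1)x_3$: the commutator relation forces $A+x_2\in L=K(x_1)$; the order relation $\sigma_1^p\in\{1,\sigma_3\}$ forces $\Tr_{L/K}(A+x_2)=0$ (for $\gH$) or $\Tr_{L/K}(A+x_2-S(x_1,1))=0$ (for $\gD,\gM$, using $\Tr_{L/K}S(x_1,1)=1$); additive Hilbert 90 then lets one replace $x_3$ so that $(\sigma_1-1)x_3$ is exactly $-x_2$, resp.\ $-x_2+S(x_1,1)$; finally one recovers $\wp(x_3)$ up to an element of $K$ by ``integrating'' $(\sigma_i-1)\wp(x_3)$. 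Your proposal explicitly defers precisely this step (``making this step rigorous is essentially a local manifestation of Artin--Schreier--Witt theory''), i.e.\ the appearance of $S(x_1,\kappa_1)$ when $\sigma_1^p=\sigma_3$ is asserted rather than derived, so the forward implication is not proved. Supplying the trace-plus-Hilbert-90 normalization (or an equivalent cohomological argument) is what is missing.
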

\begin{remark}\label{Witt/Saltman}
   That a theorem like Theorem \ref{as-gen} might exist is not a
   surprise. Saltman has proven that such descriptions exist for all
   Galois extensions with Galois group of order $p^n$, $n\geq 1$
   \cite[Corollary 2.5]{saltman}. What Theorem \ref{as-gen} does, that
   is not in \cite{saltman}, is explicitly describe the term 
   $\eus(x_1,x_2)$.
  \end{remark}

\begin{remark}\label{relations}
 Observe that for $p=2$, $S(x_1,\kappa_1)=\kappa_1x_1$ and
 $S(x_1,1)=x_1$.  Thus, unsurprisingly, the descriptions of
 $\eus(x_1,x_2)$ for $\gD$- and $\gM$-extensions agree.  Observe that
 for $p>2$, $\wp(x_i^2)=2x_i\kappa_i+\kappa_i^2$ and thus
 $x_i\kappa_i\in K(x_i)^\wp+K$ for $i=1,2$. This means that we can
 choose to set $\eus(x_1,x_2)=-\kappa_2x_1+\kappa_1x_1+\kappa_2x_2$
 for $\Gal(L/K)\cong \gH$, which would make the descriptions of
 $\eus(x_1,x_2)$ for $\gQ$- and $\gH$-extensions agree.  Finally,
 observe that $\wp(x_1x_2)=\kappa_2x_1+\kappa_1x_2+\kappa_1\kappa_2$.
 As a result, $-\kappa_2x_1\equiv
 \kappa_1x_2\pmod{K(x_1,x_2)^\wp+K}$. Replacing $x_3$ by $-x_3$, we
 can choose $\eus(x_1,x_2)=-\kappa_1x_2+\kappa_1x_1+\kappa_2x_2$ for
 $\Gal(L/K)\cong \gQ$ and choose $\eus(x_1,x_2)=-\kappa_1x_2$ for
 $\Gal(L/K)\cong \gH$.  These replacements show that the Galois
 extensions described in Theorem \ref{as-gen} for $\gQ, \gH$ remain
 invariant under the transposition $(1\;2)$ acting on subscripts, just
 as the descriptions of the two group presentations of groups are
 invariant. This is a comforting rather than a surprising observation.
\end{remark}
So that we may use Theorem \ref{as-gen} to determine ramification
breaks, we specialize the result record to local fields.

\begin{corollary}\label{ram-as-gen}
  Let $K$ be a local field of characteristic $p>0$.  An extension
  $L/K$ is a totally ramified Galois extension with $\Gal(L/K)\cong
  \gQ, \gD,\gH$ or $\gM$ if and only if the content of Theorem
  \ref{as-gen} holds, except that we replace the statement 
\begin{itemize}
\item  such that ``$\kappa_1,\kappa_2$
  represent $\bF_p$-linearly independent cosets of $K/K^\wp$''
\end{itemize}
with the alternate statement
\begin{itemize}
\item ``satisfying $v_K(\kappa_i)=-b_i<0$ with $p\nmid b_i$, and if
  $b_1=b_2$ then, without loss of generality, $\kappa_1,\kappa_2$
  represent $\bF_p$-linearly independent cosets in
  $\kappa_2\caO_K/\kappa_2\caM_K$,''
\end{itemize}
  With this replacement, the
  conclusions of Theorem \ref{as-gen} hold.
\end{corollary}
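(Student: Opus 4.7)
The approach is to invoke Theorem \ref{as-gen} and then translate the abstract condition that $\kappa_1,\kappa_2$ represent $\bF_p$-linearly independent cosets of $K/K^\wp$ into a concrete valuation-theoretic condition, using the theory of reduced Artin--Schreier generators recalled in Section \ref{AS} to simultaneously package the total-ramification hypothesis.

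For the forward direction, suppose $L/K$ is totally ramified Galois with $\Gal(L/K)\cong G$ for one of the four groups, and apply Theorem \ref{as-gen} to obtain the AS-generator description. Since $L/K$ is totally ramified, so are the cyclic degree-$p$ subextensions $K(x_i)/K$ for $i=1,2$, so we may replace each $\kappa_i$ by a reduced representative in its coset of $K/K^\wp$ without altering $K(x_i)$ or $L$. By Section \ref{AS}, this forces $v_K(\kappa_i)=-b_i<0$ with $p\nmid b_i$. If $b_1=b_2$ but the residue classes of $\kappa_1,\kappa_2$ in the one-dimensional residue-field space $\kappa_2\caO_K/\kappa_2\caM_K$ are $\bF_p$-linearly dependent, we change basis within the $2$-dimensional $\bF_p$-subspace of $K/K^\wp$ spanned by $\kappa_1,\kappa_2$, replacing $\kappa_1$ by a combination $c_1\kappa_1+c_2\kappa_2$ whose reduced representative has either strictly greater valuation (iteratively reducing to the case $b_1\neq b_2$) or achieves residue-field $\bF_p$-independence from $\kappa_2$. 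The Gaussian-elimination procedure terminates because valuations are bounded. This change of basis corresponds, via Remark \ref{group-pres}, to replacing $\sigma_1$ by a suitable power product in the generating set of $G$, and is thus the substance of the ``without loss of generality'' clause.

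For the reverse direction, given reduced $\kappa_1,\kappa_2$ satisfying the valuation condition, we verify that they represent $\bF_p$-linearly independent cosets of $K/K^\wp$ so that Theorem \ref{as-gen} applies. For any $(c_1,c_2)\in\bF_p^2\setminus\{0\}$, the element $c_1\kappa_1+c_2\kappa_2$ has valuation $-\max\{b_i:c_i\neq 0\}$ when $b_1\neq b_2$, and valuation $-b_1$ when $b_1=b_2$ by the leading-coefficient hypothesis; in both cases this is negative and coprime to $p$, so the element is itself reduced (since $K^\wp$ consists of elements with valuation either $\geq 0$ or a negative multiple of $p$), hence nonzero in $K/K^\wp$. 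Theorem \ref{as-gen} then produces the Galois group structure. Total ramification of $L/K$ follows because every cyclic degree-$p$ subextension of $L/K$ is classified by a nonzero line in $G^{\mathrm{ab}}\cong(\bF_p)^2$ and hence by an AS-generator of the form $c_1\kappa_1+c_2\kappa_2$ which, by the same argument, is reduced with negative $p$-coprime valuation; every cyclic degree-$p$ subextension of $L/K$ is therefore totally ramified, leaving no room for a nontrivial unramified subextension.

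The main obstacle is the $b_1=b_2$ case, where the ``without loss of generality'' phrase is substantive: one must verify both that the Gaussian-elimination change of basis within the $\bF_p$-subspace spanned by $\kappa_1,\kappa_2$ is available through the reparametrization of Remark \ref{group-pres}, and that it always terminates with either a strict inequality $b_1\neq b_2$ or residue-field $\bF_p$-independence at the common break.
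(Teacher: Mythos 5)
Your proof is correct and follows essentially the same route as the paper, which justifies this corollary through the reduction theory of \S\ref{AS}, Remark \ref{technicalities}, and the discussion opening \S\ref{arith}: reduce the AS-generators, read the breaks off from total ramification, handle $b_1=b_2$ by a unipotent change of basis tied to Remark \ref{group-pres}, and conversely deduce $\bF_p$-independence in $K/K^\wp$ from the fact that no element of $K^\wp$ has negative valuation coprime to $p$ (with total ramification recovered from the ramification of every degree-$p$ subextension, all of which lie in $M$ since every maximal subgroup contains the commutator subgroup). One small correction: for $\gD$ and $\gM$ the generators $\sigma_1,\sigma_2$ are not interchangeable, so the substitution licensed by Remark \ref{group-pres}, namely $\sigma_1\mapsto\sigma_1\sigma_2^{-a}$, modifies $\kappa_2\mapsto a\kappa_1+\kappa_2$ rather than $\kappa_1$ as you wrote; a single such step (no iteration is needed) already forces the new break to satisfy $b_2'<b_1$, landing in the case $b_1\neq b_2$ where the residue condition is vacuous.
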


\begin{theorem} \label{sharp-bound}
  Let $K$ is a local field of characteristic $p>0$ with perfect
  residue field.  Let $M/K$ be a totally ramified $C_p^2$-extension
  with upper ramification breaks $u_1\leq u_2$. Therefore $M=K(y_1,y_2)$
  for some $y_1,y_2\in K^\sep$
  such that $\wp(y_i)=\beta_i$ with $v_K(\beta_i)=u_i$.
  Embed $M/K$ in a Galois extension $N/K$ with 
  Galois group $\Gal(N/K)=G\cong \gD,\gH,\gM$ or $\gQ$, as described in Theorem \ref{as-gen}.
  \begin{quote}
    {\rm [}Note: The $x_i$ in Theorem \ref{as-gen} are determined by the generators of the Galois groups, as described in \eqref{gps}. It is not necessarily the case that $y_i=x_i$. See Remark \ref{x-y-switch}.{\rm ]}
    \end{quote}
  For each group $G$, there is a lower bound $B_G$ such that the upper ramification breaks of
  $N/K$ are $u_1\leq u_2\leq u_3$ where $B_G\leq u_3$ such that if $B_G<u_3$ then $u_3$ is an integer coprime to $p$.
  In only remains to describe these lower bounds $B_G$:
  \[B_{\gD}=\begin{cases}
  u_1+u_2 &\mbox{for } u_2=u_1\mbox{ or }G_{l_2}=\langle \sigma_1^p,\sigma_2\rangle,\\
  2u_2 &\mbox{for } u_2\neq u_1\mbox{ and }G_{l_2}=\langle \sigma_1\rangle.
  \end{cases}
  \]
  By Lemma \ref{decomp}
  \[\beta_2=\mu_0^p+\sum_{i=1}^{p-1}\mu_i^p\beta_1^i\]
  for some $\mu_i\in K$ satisfying certain technical conditions stated there. Set
\[r=-v_K\left(\sum_{i=1}^{p-2}\mu_i^p\beta_1^i \right),\qquad s=-v_K(\mu_{p-1}^p\beta_1^{p-1}).\]
Observe
 $s\equiv -u_1\bmod p$, $r\not\equiv 0,-u_1 \bmod p$ and
$u_2=\max\{r,s\}$. 
Then
\[B_{\gH}=\max\left\{s+u_1,r+\frac{u_1}{p}\right\}.\]
If $\mu_{p-1}$, which is used to define $s$, satisfies $\mu_{p-1}\in -1+\caM_K$, set $\epsilon=\mu_{p-1}+1$ and
\[t=-v_K(\epsilon^p\beta_1^{p-1}).\]
Then
\[B_{\gM}=\begin{cases}
\max\left\{pu_1,s+u_1,r+\frac{u_1}{p}\right\} &\mbox{for }\mu_{p-1}\neq -1\bmod \caM_K,\mbox{ and }\\
&\hspace*{.75cm}u_2=u_1\mbox{ or }G_{l_2}=\langle \sigma_1^p,\sigma_2\rangle,\\
\max\left\{(p-1)u_1+\frac{u_1}{p},t+u_1,r+\frac{u_1}{p}\right\} 
&\mbox{for }\mu_{p-1}= -1\bmod \caM_K,\mbox{ and }\\
&\hspace*{.75cm}u_2=u_1\mbox{ or }G_{l_2}=\langle \sigma_1^p,\sigma_2\rangle,\\
pu_2&\mbox{for } u_2\neq u_1\mbox{ and }G_{l_2}=\langle \sigma_1\rangle.
\end{cases}\]

Using Lemma \ref{decomp}, $\beta_2\equiv \mu^p\beta_1\bmod \caO_K$. If $u_2=u_1$ then $\mu=\omega+\epsilon$ for some root of unity $\omega\in\caO_K/\caM_K$ and $\epsilon\in\caM_K$ satisfying $v_K(\epsilon)=e>0$.
Let
\[B_{\gQ}=\begin{cases}
2u_2 & \mbox{for } u_2\neq u_1,\mbox{ or }u_2= u_1,\omega^3\neq 1,\\
\max\left\{\frac{3u_1}{2},2u_1-2e\right\} & \mbox{for } u_2=u_1, \omega^3= 1.
\end{cases}
\]
\end{theorem}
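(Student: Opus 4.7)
The plan is to reduce the problem to computing the unique ramification break $\ell$ of $N/M$ and then translating via Herbrand. Since $G$ is nonabelian of order $p^3$, its center $Z(G) = [G,G]$ has order $p$, and any order-$p$ normal subgroup is contained in (hence equal to) $Z(G)$; so $M$ must coincide with $N^{Z(G)}$. The extension $N/M$ is cyclic of degree $p$, and the identification $\Gal(N/M)_i = G_i \cap Z(G)$ gives $\ell = l_3$. The Herbrand formula
\[u_3 \;=\; u_2 + \frac{\ell - l_2}{p^2},\qquad l_2 = pu_2-(p-1)u_1,\]
then recovers $u_3$. By Theorem \ref{as-gen}, $N = M(x_3)$ with Artin--Schreier generator $\alpha := \eus(x_1,x_2)+\kappa_3 \in M$, so $\ell$ equals the negative of the maximum of $v_M$ over the coset $\alpha + M^\wp$.

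The main task is to reduce $\alpha$ modulo $M^\wp$. The key algebraic tools are the Witt identity $\wp(x_1 x_2) = \kappa_2 x_1 + \kappa_1 x_2 + \kappa_1\kappa_2$, which trades $-\kappa_2 x_1$ for $\kappa_1 x_2$ modulo $M^\wp + K$; for $p>2$, the identity $\wp(x_i^2/2) = \kappa_i x_i + \kappa_i^2/2$, which absorbs $\kappa_i x_i$; and Lemma \ref{decomp}, which expands $\beta_2 \equiv \mu_0^p + \sum_{i=1}^{p-1} \mu_i^p \beta_1^i \pmod{K^\wp}$ and so translates mixed terms into polynomials in $x_1$ and $\kappa_1$ whose dominant valuations are controlled by $r$, $s$, and $t$. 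Once $\alpha$ is rewritten as a sum of terms whose $v_M$-values are either divisible by $p$ (hence further reducible, since the residue field is perfect) or coprime to $p$ (hence forced to survive), the maximum surviving $-v_M$ gives $\ell$ and hence $B_G$ via the Herbrand formula.

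The proof proceeds by case analysis on $G$. For $\gH$ with $p > 2$, $\eus = -\kappa_2 x_1 \equiv \kappa_1 x_2 + \kappa_1\kappa_2 \pmod{M^\wp}$; substituting the $\beta_2$-expansion yields survivors matching $B_\gH = \max\{s+u_1,\, r+u_1/p\}$. For $\gM$, the extra summand $S(x_1,\kappa_1)$ contributes Witt-polynomial terms whose leading interaction with the coefficient $\mu_{p-1}^p$ of the expansion produces $B_\gM = \max\{pu_1, s+u_1, r+u_1/p\}$; when $\mu_{p-1}\equiv -1\pmod{\caM_K}$ the leading term cancels and one tracks the next order, giving the bound in terms of $t$. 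For $\gD$ (with $p = 2$), $\wp(x_1^2) = \kappa_1^2$ provides no useful reduction of $\kappa_i x_1$, so a direct computation gives $B_\gD$, split into two subcases according to whether $G_{l_2} = \langle \sigma_1^p, \sigma_2 \rangle$ or $G_{l_2} = \langle \sigma_1 \rangle$. For $\gQ$, the simultaneous presence of $\kappa_1 x_1 + \kappa_2 x_2$ and $-\kappa_2 x_1$ in characteristic $2$ produces a residue-field invariant $\omega$, whose cube detects the extra cancellation captured by the case $\omega^3 = 1$. In every case, whenever $u_3 > B_G$ the $\eus$-term is fully reducible modulo $M^\wp$ up to valuation at most that dictated by $B_G$, so $\alpha_{\mathrm{red}}$ is determined by $\kappa_3$; a direct computation then shows $\ell \equiv l_2 \pmod{p^2}$, forcing $u_3$ to be an integer coprime to $p$. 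The principal obstacle will be the degenerate subcases of $\gM$ (with $\mu_{p-1} \equiv -1$) and $\gQ$ (with $\omega^3 = 1$), where the generic leading coefficient vanishes and one must carefully compute the subleading Witt contribution via Lemma \ref{decomp}; a secondary difficulty is the bookkeeping when $u_1 = u_2$, because Corollary \ref{ram-as-gen} permits $\bF_p$-mixing of $\kappa_1, \kappa_2$ that must be reconciled with the chosen group presentation in identifying $G_{l_2}$.
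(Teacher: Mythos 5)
Your high-level skeleton (identify $N/M$ as the last ramification step, reduce its Artin--Schreier generator modulo $M^\wp+K$, translate back through Herbrand) is the same as the paper's, but the step that actually carries the content of the theorem rests on a false criterion. You claim that once $\eus(x_1,x_2)+\kappa_3$ is written as a sum of terms, those with $v_M$ divisible by $p$ are further reducible and those with $v_M$ coprime to $p$ are ``forced to survive.'' That dichotomy is wrong. Because $y_1^p=y_1+\beta_1$, the element $\wp(\mu y_1^{i+1})$ for $2\leq i+1\leq p-1$ has leading term $(i+1)\mu^p\beta_1^i y_1$, whose valuation is $\equiv -u_1\bmod p$, hence coprime to $p$; so terms of valuation coprime to $p$ can very well lie in $L^\wp+K$ modulo higher-order terms. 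This is exactly what Lemma \ref{image}, Corollary \ref{complement} and the congruence \eqref{relate} in Proposition \ref{p>2} are built to control: each summand $\mu_i^p\beta_1^i y_1$ ($1\leq i\leq p-2$) of $\beta_2 y_1$ has valuation coprime to $p$ yet reduces, with its valuation shifted by $(p-2)u_1$, and it is precisely this shift that turns the naive contribution into the $r+\frac{u_1}{p}$ appearing in $B_{\gH}$ and $B_{\gM}$. Carried out as you describe, your computation would keep these terms at their original valuations and produce incorrect formulas. You would need either the paper's route --- reduce inside the subfield $L=K(y_1)$ using Lemma \ref{image}/Corollary \ref{complement}, then transfer to $M$ via the $C_p^2$-ramification statement of Lemma \ref{C_p^2-breaks} --- or a correct description of the image of $\wp$ on $M$ modulo $M^\wp+K$ for the degree-$p^2$ extension, which you do not supply.

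Two smaller points. First, your assertion that the break of $N/M$ equals $l_3$ does not follow merely from $\Gal(N/M)_i=G_i\cap Z(G)$; it requires knowing $G_{l_3}=Z(G)$, which the paper gets from \cite[Chapter IV, Proposition 10]{serre:local} (a commutator of elements of $G_i$ and $G_j$ lies in $G_{i+j+1}$, so the bottom ramification group is central). Second, for $\gH$ you propose to trade $-\kappa_2x_1$ for $\kappa_1x_2$ and then expand; this puts you in $K(y_2)$ and would require expanding $\beta_1$ in powers of $\beta_2$, which does not match the $r,s$ of Lemma \ref{decomp} used in the statement --- the paper instead reduces $-\beta_2y_1$ directly in $K(y_1)$.
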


\begin{remark}
  The conclusion of the Theorem of Hasse-Arf is the statement that the upper ramification breaks are integers. Because $B_{\gD}$ is an integer, the conclusion continues to hold for $G\cong \gD$. Because $B_G$ for $G\cong \gQ,\gH,\gM$ can fail to be an integer, the conclusion of Hasse-Arf can fail to hold for $G\cong \gQ,\gH,\gM$.
  \end{remark}

\subsection{Outline}
The link between reduced AS-generators and ramification breaks of
$C_p$-extensions, as described in \S\ref{AS}, is the tool we use to
determine the upper ramification breaks of our extensions.  Thus we
begin in \S\ref{embed} by deriving the AS-generators described in
Theorem \ref{as-gen} and Corollary \ref{ram-as-gen}.  Recall that
$l_1\leq l_2\leq l_3$ denote the lower ramification breaks of $N/K$.
Using \cite[Chapter IV, Proposition 10]{serre:local}, we will show that
$G_{l_3}=Z(G)=\langle\sigma_3\rangle$. Since $\langle\sigma_3\rangle$
is a ramification subgroup, the ramification break for $N/M$ is the
third lower ramification break $l_3$ of $N/K$. This means that to determine $l_3$, it is sufficient to reduce the AS-generator $\eus(x_1,x_2)+\kappa_3\in M$, except that the
notion of ``reduced,'' as given in \S\ref{AS},
is a little too simple for our purpose. Thus in
\S\ref{arith}, we generalize this notion
and apply it to determine the ramification breaks of certain auxiliary $C_p$-extensions. Then
in \S\ref{linking}, we use ramification theory to pull all these results together to determine
the ramification break of $M(x_3)/M$.
The upper ramification breaks $u_1\leq u_2\leq u_3$ follow.
We close in \S\ref{notting} by pointing out applications to the Nottingham group.

\section{Artin-Schreier generators}\label{embed} Let $K$ be a field of
characteristic $p>0$.  Notice: The results of this section do not require $K$ to be a local field.
Let $N/K$ be a Galois extension with
$\Gal(N/K)=\langle\sigma_1,\sigma_2\rangle \cong \gQ,
\gD,\gH$ or $\gM$, adopting the notation of \eqref{gps}.  Let $M=N^{\sigma_3}$ with
$\sigma_3=[\sigma_1,\sigma_2]$ denote the
fixed field of the center of $\Gal(N/K)$.  In every case,
$\langle\sigma_1,\sigma_2\rangle/\langle\sigma_3\rangle\cong
C_p^2$. Thus we may assume without loss of generality, $M=K(x_1,x_2)$ for
some $x_i\in N$ such that $\wp(x_i)=\kappa_i$ for some
$\kappa_i\in K$, $i=1,2$ that
represent $\bF_p$-linearly independent cosets of $K/K^\wp$, and
$(\sigma_i-1)x_j=\delta_{ij}$ for
$1\leq i,j\leq 2$.
\begin{remark}\label{technicalities}
  When we apply the results of this section in \S\ref{arith}, we will
  assume that in a preparatory step the AS-generators
  $\kappa_1,\kappa_2$ were adjusted in the two ways listed here. But
  since this preparatory step is not required in the remainder of this
  section, we do not yet assume that it has been done.
  
  \begin{enumerate}
  \item Observe $K(x_i)=K(x_i+\kappa)$ for all $\kappa\in
    K$. Thus $K(x_i)=K(x_i')$ for all $x_i'\in N$ satisfying
    $\wp(x_i')\in \kappa_i+K^\wp$ where $K^\wp=\{\wp(\kappa):\kappa\in
    K\}$. We may replace $\kappa_i$ with any element
    in $\kappa_i+K^\wp$ without changing 
    $M=K(x_1,x_2)$.
  \item Observe
  that $M=K(x_1,x_2)=K(x_1,x_2')$ for $x_2'=ax_1+ x_2$,
  $a\in\bZ$. We may replace $x_1$ with $x_1'$ while replacing
  $\kappa_1$ with $a\kappa_1+\kappa_2$ without changing the
  description of $M=K(x_1,x_2)$. Furthermore, if we replace
  $\sigma_1$ with $\sigma_1'=\sigma_1\sigma_2^{-a}$, then
  $\sigma_1',\sigma_2$ act on $x_1,x_2'$ as $\sigma_1,\sigma_2$ acted
  on $x_1,x_2$. By Remark \ref{group-pres}, replacing
  $\sigma_1$ with $\sigma_1'=\sigma_1\sigma_2^{-a}$ does not change
  the presentation of the Galois group.
  \end{enumerate}
\end{remark}

 Our description of the remaining part of the Galois extension, namely
$N/M$, depends upon the particular Galois group.
\subsection{$\gQ$}
Since $N/M$ is a quadratic extension, $N=M(x_3)$ for some $x_3\in N$ such that $\wp(x_3)\in M$ and $(\sigma_3-1)x_3=1$.
For $i=1,2$ there exist $A_i\in N$ such that $(\sigma_i-1)x_3=A_i$.
Since $[\sigma_i,\sigma_3]=1$ for $i=1,2$ we find that $A_i$ lies in the fixed field of $\sigma_3$. Thus $A_i\in M$.
Since $(\sigma_i-1)^2=(\sigma_3-1)$ for $i=1,2$ we find that
$(\sigma_i-1)A_i=1$. Thus
$A_i-x_i$ lies in the fixed field of $\langle\sigma_3,\sigma_i\rangle$. In other words, 
\[A_1-x_1\in K(x_2),\qquad A_2-x_2\in K(x_1).\]
There exist $a,b,c,d\in K$ such that $A_1=a+bx_2+x_1$ and $A_2=c+dx_1+x_2$.
Apply $\sigma_1\sigma_2=\sigma_2\sigma_1\sigma_3$ to $x_3$. The result is
$x_3+A_1+\sigma_1(A_2)=x_3+A_2+\sigma_2(A_1)+1$. Thus
\[(\sigma_1-1)A_2=(\sigma_2-1)A_1+1.\]
From this, we determine that $d=b+1$. Observe that $x_3'=x_3+dx_1x_2+ax_1+cx_2$
satisfies $\wp(x_3')\in M$, $(\sigma_3-1)x_3'=1$. Thus we may replace $x_3$ with $x_3'$ so that, without loss of generality,
\[(\sigma_1-1)x_3=x_1+x_2,\qquad (\sigma_2-1)x_3=x_2.\]

Set $T=\wp(x_3)\in M$. Since $(\sigma_1-1)x_3=x_1+x_2$, we find that $(\sigma_1-1)T=\kappa_1+\kappa_2$. Since $(\sigma_2-1)x_3=x_2$, $(\sigma_2-1)T=\kappa_2$.
Thus $T-(\kappa_1+\kappa_2)x_1-\kappa_2x_2$ is fixed by both $\sigma_1$ and $\sigma_2$. Thus $T=(\kappa_1+\kappa_2)x_1+\kappa_2x_2+\kappa_3$ for some $\kappa_3\in K$.

\subsection{$\gD, \gH, \gM$}
Let $L=N^{\sigma_2,\sigma_3}$ be
the fixed field of 
$\langle\sigma_2,\sigma_3\rangle$, which is a normal subgroup of order $p^2$. Thus $L=K(x_1)$ with $\wp(x_1)=\kappa_1$.
Observe that 
$N/L$ is a $C_p^2$ extension and consider
$N^{\sigma_2}/L$, a cyclic extension of degree $p$. The image of 
$\sigma_3$ generates $\Gal(N^{\sigma_2}/L)$.  So, without loss of generality,
$N^{\sigma_2}=L(x_3)$ for some  $x_3\in N$ such that $\wp(x_3)\in L$
and $(\sigma_3-1)x_3=1$. Since $x_3\in N^{\sigma_2}$, $(\sigma_2-1)x_3=0$.

It remains for us to determine $(\sigma_1-1)x_3=A\in N$.  Since
$[\sigma_1,\sigma_3]=1$, $A\in M$.  Apply
$\sigma_1\sigma_2=\sigma_2\sigma_1\sigma_3$ to $x_3$ to find that
$(\sigma_2-1)A=-1$. This means that $A+x_2$ is fixed by $\sigma_2$ and
thus $A+x_2\in L$. Further considerations depend upon the  group.

\subsubsection{$\gH$}
Apply the trace $\Tr_{L/K}$ to $A+x_2\in L$ and find 
$\Tr_{L/K}(A+x_2)=(\sigma_1^p-1)x_3+px_2=0$.  Thus, by the additive
version of Hilbert's Theorem 90, there is an $\ell\in L$ such that
$(\sigma_1-1)\ell=A+x_2$. Let $x_3'=x_3-\ell$. Observe that
$\wp(x_3')\in L$, $(\sigma_3-1)x_3'=1$, $(\sigma_2-1)x_3'=0$ and
$(\sigma_1-1)x_3'=-x_2$.  Thus without loss of generality, we relabel
so that $x_3$ has these properties:
\[(\sigma_2-1)x_3=0, \qquad (\sigma_1-1)x_3=-x_2.\]

Set $T=\wp(x_3)\in L$. Since $(\sigma_1-1)x_3=-x_2$, we find that
$(\sigma_1-1)T=-\kappa_2$. Thus $T+\kappa_2x_1\in L$ is fixed by
$\sigma_1$, which means $T+\kappa_2x_1\in K$ and
$T=-\kappa_2x_1+\kappa_3$ for some $\kappa_3\in K$.

\subsubsection{$\gD, \gM$}

Observe that $S(x_1,\kappa_1)=\kappa_1x_1$ when $p=2$. Thus $\gD$ is not a separate case.
Since it is well-known that
$S(x_1,1),S(x_1,\kappa_1)\in L$ satisfy $\Tr_{L/K}S(x_1,1)=1$ and
$(\sigma_1-1)S(x_1,\kappa_1)=\wp(S(x_1,1))$, we leave verification of these identifies to the reader; namely,
\[\Tr_{L/K}\left(\frac{x_1^p+1-(x_1+1)^p}{p}\right)=1,\]
\[(\sigma_1-1)\left(\frac{x_1^p+\kappa_1^p-(x_1+\kappa_1)^p}{p}\right)=\wp\left(\frac{x_1^p+1-(x_1+1)^p}{p}\right).\]

We are now ready to proceed. Apply the trace $\Tr_{L/K}$ to $A+x_2-S(x_1,1)\in L$ and find
$\Tr_{L/K}(A+x_2-S(x_1,1))=(\sigma_1^p-1)x_3+px_2-1=0$.  Thus, by the
additive version of Hilbert's Theorem 90, there is an $\ell\in L$ such
that $(\sigma_1-1)\ell=A+x_2-S(x_1,1)$. Let $x_3'=x_3-\ell$. Observe
that $\wp(x_3')\in L$, $(\sigma_3-1)x_3'=1$, $(\sigma_2-1)x_3'=0$ and
$(\sigma_1-1)x_3'=-x_2+S(x_1,1)$.  Thus without loss of generality, we
relabel so that $x_3$ has these properties:
\[(\sigma_2-1)x_3=0, \qquad (\sigma_1-1)x_3=-x_2+S(x_1,1).\]

Again, set $T=\wp(x_3)\in L$. Since $(\sigma_1-1)x_3=-x_2+S(x_1,1)$, we
find that $(\sigma_1-1)T=-\kappa_2+(\sigma_1-1)S(x_1,\kappa_1)$. Thus
$T+\kappa_2x_1-S(x_1,\kappa_1)\in L$ is fixed by $\sigma_1$. We conclude that
$T=-\kappa_2x_1+S(x_1,\kappa_1)+\kappa_3$ for some $\kappa_3\in
K$.

The converse follows from \cite[Corollary 2.5]{saltman}. However, since for these small extensions, one might want to see the details in the converse worked out, we provide a sketch. First, we introduce a lemma:

\begin{lemma}\label{converse}
  Let $K$ be a local field of characteristic $p>0$, $M/K$ be a $C_p^2$-extension with $\Gal(M/K)=\langle\bar{\sigma}_1,\bar{\sigma}_2\rangle$, and let
  $N/M$ be a $C_p$-extension with $N=M(x)$ for some $x\in K^\sep$
  such that $x^p-x=\mu$ with $\mu\in M$. Suppose that 
  \[(\bar{\sigma}_i-1)\mu\in M^\wp\]
  for $i=1,2$.  Then $N/K$ is Galois.
\end{lemma}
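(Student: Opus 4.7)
The plan is to show that $N/K$ is normal; separability is automatic, since $N$ is built from $K$ by a Galois step ($M/K$) followed by an Artin--Schreier step ($N/M$), both of which are separable. Because $N/M$ is already Galois, to conclude that $N/K$ is normal it suffices to exhibit, for each generator $\bar\sigma_i$ of $\Gal(M/K)$, a lift to an automorphism of $N$.

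Fix $i \in \{1,2\}$. The hypothesis $(\bar\sigma_i-1)\mu \in M^\wp$ furnishes an element $m_i \in M$ with $\wp(m_i)=\bar\sigma_i(\mu)-\mu$. Choose any extension $\tilde\sigma_i \in \Gal(K^\sep/K)$ of $\bar\sigma_i$, and apply $\wp$ to $\tilde\sigma_i(x)$:
\[\wp(\tilde\sigma_i(x)) = \tilde\sigma_i(\wp(x)) = \bar\sigma_i(\mu) = \mu + \wp(m_i) = \wp(x+m_i).\]
Since $\ker\wp=\bF_p$, this forces $\tilde\sigma_i(x) = x + m_i + c_i$ for some $c_i\in\bF_p$. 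In particular $\tilde\sigma_i(x)\in M(x)=N$. Combined with $\tilde\sigma_i(M)=M$ (since $M/K$ is Galois), this gives $\tilde\sigma_i(N)\subseteq N$; finiteness of $[N:K]$ then upgrades the inclusion to equality. So $\tilde\sigma_i|_N$ is the desired lift of $\bar\sigma_i$, completing the proof that $N/K$ is normal, hence Galois.

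I do not anticipate a serious obstacle here: the argument is a standard Artin--Schreier descent, and the real content is recognizing that the hypothesis, rewritten as $\bar\sigma_i(\mu)\equiv \mu\pmod{M^\wp}$, is exactly what forces each conjugate root $\tilde\sigma_i(x)$ back into $N$ rather than merely into $K^\sep$. The one point to keep in mind is that $\tilde\sigma_i(M)=M$ is used implicitly when concluding $\tilde\sigma_i(N)\subseteq N$; this is where the assumption that $M/K$ is already Galois (not merely separable) enters. As a byproduct, the $p$ choices of $c_i\in\bF_p$ parametrize the $p$ lifts of $\bar\sigma_i$ to $\Gal(N/K)$, which is what the subsequent classification will exploit when distinguishing among $\gQ,\gD,\gH,\gM$, but this goes beyond what the lemma itself asserts.
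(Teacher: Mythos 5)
Your proof is correct and follows essentially the same route as the paper's: extend each $\bar\sigma_i$ to $K^{\sep}$, observe that $\tilde\sigma_i(x)$ satisfies $X^p-X=\bar\sigma_i(\mu)=\mu+\wp(m_i)$, and conclude that $\tilde\sigma_i(x)$ differs from $x+m_i$ by an element of $\ker\wp=\bF_p$, hence lies in $N$. The only cosmetic difference is that you phrase the last step via $\ker\wp$ while the paper notes that $\sigma_i(x)-\mu_i$ is a root of $X^p-X=\mu$; both then conclude normality with the same brevity.
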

\begin{proof}
  By assumption, for $i=1,2$ there exist $\mu_i\in M$ such that
  $\bar{\sigma}_i(\mu)=\mu+\wp(\mu_i)$.  Both of
  $\bar{\sigma}_1,\bar{\sigma}_2$ can be extended to isomorphisms
  $\sigma_1,\sigma_2$ from $N$ into $K^\sep$. Observe that
  $\sigma_i(x)\in K^\sep$ is a root of $X^p-X=\bar{\sigma}_i(\mu)\in
  M[X]$. Thus $\sigma_i(x)-\mu_i$ is a root of $X^p-X=\mu$, which
  means that $\sigma_i(x)\in N$. The result follows.
  \end{proof}

Consider the case where $p>2$, $\wp(x_1)=\kappa_1$,
$\wp(x_2)=\kappa_2$, and
$\wp(x_3)=-\kappa_2x_1+S(x_1,\kappa_1)+\kappa_3$. To prove that $M(x_3)/K$ is Galois we use Lemma \ref{converse}. Apply $(\sigma_1-1)$ to 
the AS-generator
$-\kappa_2x_1+S(x_1,\kappa_1)+\kappa_3$ and the result is $-\kappa_2+(\sigma_1-1)S(x_1,\kappa_1)=\wp(-x_2+S(x_1,1))\in M^\wp$.
Apply $(\sigma_2-1)$ and the result is $0\in M^\wp$.

Now that $N/K$ is Galois, we identify  which Galois group by the relationships that $\sigma_1,\sigma_2$ satisfy on $x_3$.
Since
\[\wp((\sigma_2-1)x_3)=(\sigma_2-1)\wp(x_3)=0,\]
we determine that $(\sigma_2-1)x_3$ satisfies $X^p-X=0$ and thus $(\sigma_2-1)x_3=c\in\bF_p$. Therefore $(\sigma_2^p-1)x_3=0$ and $(\sigma_1-1)(\sigma_2-1)x_3=0$. We have proven $\lvert \sigma_2\rvert=p$.
Since
\[\wp((\sigma_1-1)x_3)=(\sigma_1-1)\wp(x_3)=-\kappa_2+\wp(S(x_1,1)),\]
we find that 
$(\sigma_1-1)x_3+x_2-S(x_1,1)=d\in\bF_p$. Thus
$(\sigma_1^p-1)x_3=(1+\sigma_1+\cdots \sigma_1^{p-1})S(x_1,1)=1$ and $(\sigma_2-1)(\sigma_1-1)x_3=-1$. We have proven $(\sigma_2^{p^2}-1)x_3=1$ and thus
$\lvert \sigma_2\rvert=p^2$.
Furthermore, putting together
\begin{align*}
(\sigma_1-1)(\sigma_2-1)x_3&=0,\\
  (\sigma_2-1)(\sigma_1-1)x_3&=-1,
  \end{align*}
we find that $\sigma_1\sigma_2(x_3)-\sigma_2\sigma_1(x_3)=1$, which means that $([\sigma_1,\sigma_2]-1)x_3=1$ and thus $[\sigma_1,\sigma_2]=\sigma_2^p$.
The other three cases are left for the reader.

\section{Reducing the Artin-Schreier generators}\label{arith}
Let $K$ be a local field of characteristic $p>0$ and let $N/K$ be a
totally ramified nonabelian extension of degree $p^3$ determined by
the Artin-Schreier equations given in Theorem \ref{as-gen} and
Corollary \ref{ram-as-gen}.  Thus the Galois group $G=\Gal(N/K)$ is
generated by $\sigma_1,\sigma_2$ where $\sigma_3=[\sigma_1,\sigma_2]$
generates the center $Z(G)$ of order $p$ and fixes a subfield
$M$, which is a
$C_p^2$-extension of $K$.  Recall the notation $G_i$ for the lower
ramification groups.  Let $u_1\leq u_2\leq u_3$ denote the upper and
$l_1\leq l_2\leq l_3$ denote the lower ramification breaks of $N/K$.

In \cite[Chapter IV, Proposition 10]{serre:local}, one sees that if
$\sigma\in G_i\setminus G_{i+1}$ and $\sigma'\in G_j\setminus
G_{j+1}$, then $[\sigma,\sigma']\in G_{i+j+1}$. As a result, the
elements of $G_{l_3}$ lie in the center $Z(G)$, and since $G_{l_3}$ is
nontrivial while the center $Z(G)$ has order $p$,
\[G_{l_3}=Z(G)=\langle \sigma_3\rangle.\]  Now that we have
proven that $M$ is the fixed field of a ramification group, we use
\cite[Chapter IV, Proposition 3, Corollary]{serre:local} to conclude
that the lower and upper ramification breaks of $M/K$ are $l_1\leq
l_2$ and $u_1\leq u_2$, respectively.  The ramification break of $N/M$
is $l_3$, and $u_3$ determined by $l_3-l_2=p^2(u_3-u_2)$.
As a result,  to determine the upper ramification sequence it only 
 remains to determinate the ramification break of the
$C_p$-extension $M(x_3)/M$ where \[\wp(x_3)=\eus(x_1,x_2)+\kappa_3\]
with $\kappa_3\in K$ and $\eus(x_1,x_2)$ described in Theorem \ref{as-gen}, and Corollary \ref{ram-as-gen}.
Thus the main object of this section is to 
``reduce'' this Artin-Schreier generator. 

Define the $K$-{\em
  group valuation}\footnote{This is the additive analog of the {\em defect} of
  a $1$-unit \cite[page 141]{wyman}.  }  of an element $\kappa\in K$ to be the
maximal valuation attained by the elements in the coset $\kappa+K^\wp$,
$K^\wp=\{\wp(\kappa):\kappa\in K\}$; namely,
\[\df_K(\kappa)=\max\{v_K(x):x\in\kappa+K^\wp\}.\]
Clearly, $\df_K$ is well-defined on the additive group $K/K^\wp$, and
$\df_K(\kappa)=\infty$ if and only if $\kappa\in K^\wp$,
while \[\df_K(\kappa_1+\kappa_2)\geq
\min\{\df_K(\kappa_1),\df_K(\kappa_2)\}\] with equality when
$\df_K(\kappa_1)\neq \df_K(\kappa_2)$.  Thus, once we compose with the
exponential function $\exp\circ\df_K: K/K^\wp\rightarrow
\Re_{>0}\cup\{\infty\}$, we have a function that satisfies the
conditions of a group valuation, a notion that Larson attributes to
Zassenhaus \cite{larson}. While there are four conditions
required of a group valuation, the remaining two hold vacuously since
addition is commutative and $\chr(K)=p$.

It is well-known that the $\bF_p$-linear map
\begin{equation}\label{wp}
  \wp: \caM_K^i/\caM_K^{i+1}\longrightarrow \begin{cases}
  \caM_K^i/\caM_K^{i+1}&\mbox{for }i\geq 0,\\
  \caM_K^{pi}/\caM_K^{pi+1}&\mbox{for }i< 0,\\
\end{cases}\end{equation}
is an isomorphism for $i\neq 0$, and for $i=0$ the kernel is
$\ker\wp=\bF_p$.  One consequence of this is that $\caM_K\subseteq
K^\wp$. Thus $\df_K(\kappa)=\infty$ for  $\kappa\in\caM_K$.
Another consequence is that for $\kappa\not\in \caM_K$,
$\df_K(\kappa)$ is either zero or equal to a negative integer coprime
to $p$.  This is used to prove that either $K(x)/K$ with
$\wp(x)=\kappa$ is unramified, or is ramified with $p\nmid
v_K(\kappa)<0$ and $b=-v_K(\kappa)$ the ramification break of
$K(x)/K$.

Recall that Remark \ref{technicalities} (1) states that
we may replace $\kappa_i$ with any $x\in \kappa_i+K^\wp$.
Thus we assume that this was done in \S\ref{embed}
so that each $\kappa_i$
is $K$-{\em reduced}\footnote{This is standard terminology. {\em e.g.} Reduced Witt vectors in \cite[\S4]{thomas}}; namely, $v_K(\kappa_i)=\df_K(\kappa_i)$.
 Since $N/K$ is totally ramified, the subextensions $K(x_i)/K$ are
ramified with ramification breaks $b_i=-v_K(\kappa_i)$.
Remark \ref{technicalities} (2) states that we may also
replace $x_2$ with $x_2'=ax_1+x_2$ and $\sigma_1$ with
$\sigma_1'=\sigma_1\sigma_2^{-a}$ without changing our description of
$M$ or the presentation of the group. We may then relabel so that not
only are $\kappa_1,\kappa_2$ reduced and $(\sigma_i-1)x_j=\delta_{ij}$
for $1\leq i,j\leq 2$, but if $v_K(\kappa_1)=v_K(\kappa_2)$,
equivalently $b_1=b_2$, then $\kappa_1,\kappa_2$ represent
$\bF_p$-linearly independent elements in
$\kappa_1\caO_K/\kappa_1\caM_K$.  Thus we may record that
\begin{equation}
  \label{one-break}
    u_1=\min\{b_1,b_2\}\mbox{ and }u_2=\max\{b_1,b_2\}
\end{equation}

Our current notation describes $M$ as $K(x_1,x_2)$ with subscripts
determined by the Galois group. In
\S\ref{embed}, the Galois group took center stage and this choice was natural.
The fixed field of $\langle\sigma_2,\sigma_3\rangle$
was $K(x_1)$ where $\wp(x_1)=\kappa_1$ and $v_K(\kappa_1)=-b_1$.  The
fixed field of $\langle\sigma_1,\sigma_3\rangle$ was $K(x_2)$ where
$\wp(x_2)=\kappa_2$ and $v_K(\kappa_2)=-b_2$.  In this subsection,
ramification takes center stage, which makes our notation inconvenient.
To address this,
we set $\{x_1,x_2\}=\{y_1,y_2\}$ such that
$\wp(y_i)=\beta_i\in K$ so that $K(y_1)/K$ and $K(y_2)/K$ have
ramification breaks $u_1=-v_K(\beta_1),u_2=-v_K(\beta_2)$, respectively.

\begin{remark} \label{x-y-switch}
Since the group presentations for $\gQ$ and $\gH$ are symmetric under
the transposition $(1\;2)$, we are able to assume that the subscripts
for the group presentations for $\gQ$ and $\gH$ were chosen from the
outset based upon the ramification filtration. Thus for these two
groups, $y_1=x_1$ and $y_2=x_2$. Only for the groups $\gD$ and $\gM$
does the introduction of $y_1,y_2$ matter. For these two groups,
because of \eqref{one-break}, we have $y_i=x_i$ when $b_1\leq b_2$,
and $y_1=x_2$, $y_2=x_1$ when $b_1>b_2$. Presenting this statement
another way, we have $y_i=x_i$ for $i=1,2$ when $u_2=u_1$ or
$G_{l_2}=\langle \sigma_1^p,\sigma_2\rangle$.  We have $y_1=x_2$,
$y_2=x_1$ when $u_2\neq u_1$ and $G_{l_2}=\langle \sigma_1\rangle$.
\end{remark}

Using Remark \ref{x-y-switch},
we translate
the formula for
$\eus(x_1,x_2)$ from 
Theorem \ref{as-gen} into expressions in $y_1,y_2,\beta_1,\beta_2$.
\begin{equation}\label{s(y,y)}
  \eus(x_1,x_2)=\begin{cases}
-\beta_2y_1+\beta_1y_1+\beta_2y_2 & \Gal(L/K)\cong \gQ,\\
-\beta_2y_1& \Gal(L/K)\cong \gH,\\
-\beta_2y_1+S(y_1,\beta_1)& \Gal(L/K)\cong \gD,\gM\mbox{ and } \\ &\hspace*{2cm} u_2=u_1\mbox{ or }G_{l_2}=\langle \sigma_1^p,\sigma_2\rangle,\\
-\beta_1y_2+S(y_2,\beta_2)& \Gal(L/K)\cong \gD,\gM\mbox{ and } \\ &\hspace*{2cm} u_2\neq u_1\mbox{ and }G_{l_2}=\langle \sigma_1\rangle.
  \end{cases}
   \end{equation}
Recall that for $p=2$, $\beta_iy_i=S(y_i,\beta_i)$ for $i=1,2$.
Furthermore, for
$\Gal(L/K)\cong \gD,\gM$. $u_2\neq u_1$ and $G_{l_2}=\langle \sigma_1\rangle$,
Remark \ref{relations} explains that
\[-\beta_1y_2=-\kappa_2x_1\equiv \kappa_1x_2=\beta_2y_1\pmod{M^\wp+K}.\]
Thus we record the following adjustment of \eqref{s(y,y)}: That
for $\Gal(L/K)\cong \gD,\gM$, $u_2\neq u_1$ and $G_{l_2}=\langle \sigma_1\rangle$, we may instead use
\begin{equation}\label{gD,gM-adjust}
\eus(x_1,x_2)=\beta_2y_1+S(y_2,\beta_2).
\end{equation}

Now we use the description of $\gH$-extensions
to motivate our next definition.
There we see that $N=M(x_3)$ where $x_3^p-x_3=-\beta_2y_1+\kappa_3$ for
some $\kappa_3\in K$. Furthermore, as $\kappa_3$ varies over all of $K$,
the field $N=M(x_3)$ varies over all $\gH$-extensions $N/K$
that contain $M=K(x_1,x_2)=K(y_1,y_2)$.
Any determination of the lower/upper ramification breaks of $N/K$, together with the ramification groups associated with them classifies the ramification breaks of
the $C_p$-extension $N/M$, and thus necessarily determines
the value of
$\max\{v_M(\tau):\tau\in -\beta_2y_1+M^{\wp}+K\}$.
However $-\beta_2y_1\in K(y_1)$, so we begin by working in the subfield $K(y_1)$, computing
\[\max\{v_{K(y_1)}(\tau):\tau\in -\beta_2y_1+K(y_1)^{\wp}+K\}.\]
This leads us to define, for a given ramified $C_p$-extension $L/K$,
the $L/K$-{\em group valuation} of an element $\ell\in L$:
\[\df_{L/K}(\ell)=\max\{v_L(x):x\in \ell+L^\wp+K\}.\]
Observe that  $\ell\in L^\wp+K$ if and only if $\df_{L/K}(\ell)=\infty$ and
\begin{equation}\label{val}
  \df_{L/K}(\ell+\ell')\geq \min\{\df_{L/K}(\ell),\df_{L/K}(\ell')\}
  \end{equation}
with equality when $\df_{L/K}(\ell)\neq\df_{L/K}(\ell')$.
Notice that if $\df_{L/K}(\ell)<\infty$ then $\df_{L/K}(\ell)<0$.
Given $\ell\in L$ with finite $L/K$-group valuation $\df_{L/K}(\ell)$, there exist $l\in L,k\in K$ such that
$v_L(\ell+\wp(l)+k)=\df_{L/K}(\ell)$. We will refer to this element
\[\URA{\ell}{L/K}=\ell+\wp(l)+k.\]
as a $L/K$-{\em reduction} of $\ell$.
Of course, while the valuation of the reduction $\URA{\ell}{L/K}$
is determined uniquely, the particular element $\URA{\ell}{L/K}$ that carries this valuation
is not. We will say that $\ell\in L$ is $L/K$-reduced if
$v_L(\ell)=v_L(\URA{\ell}{L/K})=\df_{L/K}(\ell)$.

The next result is a generalization of \eqref{wp}.
\begin{lemma}\label{image}
  Let $L/K$ be a ramified $C_p$-extension with ramification break $b$. Thus $L=K(y)$ for some $y\in K^\sep$ with $\wp(y)=\beta\in K$, $v_K(\beta)=-b$.
Let $\phi_{L/K}:[0,\infty)\rightarrow [0,\infty)$ be the Hasse-Herbrand function
      \[\phi_{L/K}(x)=\begin{cases}
      x &\mbox{ for }0\leq x\leq b,\\
      b+(x-b)/p&\mbox{ for }b< x\end{cases}\]
      with inverse $\psi_{L/K}$.
      Then for positive integers $n$ coprime to $p$,
      \[\wp:\frac{\caM_L^{-n}+K}{\caM_L^{-n+1}+K}\longrightarrow\frac{\caM_L^{-\psi_{L/K}(n)}+K}{\caM_L^{-\psi_{L/K}(n)+1}+K}\]
  is an isomorphism for $n\neq b$. If $n=b$ then $\ker\wp=\bF_py+y\caM_L+K$.
   \end{lemma}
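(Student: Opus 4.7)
The plan is to reduce the lemma to a leading-order computation using the identity $y^p=y+\beta$. First I would note that for $n$ coprime to $p$ one has $K\cap\caM_L^{-n}=\caM_K^{-\lfloor n/p\rfloor}$, which is contained in $\caM_L^{-n+1}$ because $p\lfloor n/p\rfloor\leq n-1$, so
\[\frac{\caM_L^{-n}+K}{\caM_L^{-n+1}+K}\;\cong\;\frac{\caM_L^{-n}}{\caM_L^{-n+1}},\]
a one-dimensional vector space over the residue field. The target is identified in the same way, since $\psi_{L/K}(n)\equiv n\pmod p$ for $n\le b$ and $\psi_{L/K}(n)\equiv b\pmod p$ for $n>b$ are both coprime to $p$. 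A representative of each nonzero class in the source is $cy^{k_n}$, where $k_n\in\{1,\ldots,p-1\}$ is determined by $bk_n\equiv n\pmod p$ and $v_K(c)=(bk_n-n)/p$.

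Since $\wp$ is $\bF_p$-linear and satisfies $\wp(\caM_L^{-n+1})\subseteq\caM_L^{-\psi_{L/K}(n)+1}+K$ (this follows from the slopes of $\psi_{L/K}$), I can replace $\ell$ by $cy^{k_n}$ up to the target's equivalence class. Using $y^p=y+\beta$ and the binomial theorem,
\[\ell^p=c^p(y+\beta)^{k_n}=c^p\beta^{k_n}+c^pk_n\beta^{k_n-1}y+\binom{k_n}{2}c^p\beta^{k_n-2}y^2+\cdots+c^py^{k_n}.\]
The constant term $c^p\beta^{k_n}$ lies in $K$, while among the remaining monomials the lowest $v_L$-value is attained by $c^pk_n\beta^{k_n-1}y$, at valuation $-pn+(p-1)b$. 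Combining with $-\ell=-cy^{k_n}$ at valuation $-n$, one obtains
\[\wp(\ell)\;\equiv\;c^pk_n\beta^{k_n-1}y\;-\;cy^{k_n}\pmod{\caM_L^{-\psi_{L/K}(n)+1}+K},\]
and $\min(-n,-pn+(p-1)b)=-\psi_{L/K}(n)$.

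Three cases finish the proof. When $n<b$ the piece $-cy^{k_n}$ dominates and the induced map $\bar\wp$ is multiplication by $-1$, an isomorphism. When $n>b$ the piece $c^pk_n\beta^{k_n-1}y$ dominates and $\bar\wp$ is the Frobenius $\bar c\mapsto\bar c^p$ composed with multiplication by a nonzero constant in the residue field, hence bijective because the residue field is perfect. When $n=b$ one has $k_n=1$ and both pieces sit at $v_L=-b$; writing $\ell=\alpha y+\cdots$, the map becomes $\bar\alpha\mapsto\bar\alpha^p-\bar\alpha$, whose kernel is $\bF_p$. Lifting to $L$ via the identity $\caM_L^{-b+1}=y\caM_L$ yields the kernel $\bF_py+y\caM_L+K$, as asserted.

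The main obstacle is simply the bookkeeping: verifying that cross terms from writing $\ell=\sum_{i=0}^{p-1}c_iy^i$ with several $c_i$ nonzero, as well as the tail contributions to $\wp(\ell)$, are genuinely absorbed into $\caM_L^{-\psi_{L/K}(n)+1}+K$. All such estimates reduce to the inequality $\psi_{L/K}(n')\le\psi_{L/K}(n)-1$ for positive integers $n'\le n-1$, which is immediate from the piecewise-linear shape of $\psi_{L/K}$.
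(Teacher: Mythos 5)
Your proof is correct and follows essentially the same route as the paper: both write $n=bk_n+p\,v_K(c)$ (the paper's $n=bi+pm$), represent each class by a single monomial $cy^{k_n}$, expand $\wp(cy^{k_n})=c^p(y+\beta)^{k_n}-cy^{k_n}$, and compare the two dominant terms $c^pk_n\beta^{k_n-1}y$ and $-cy^{k_n}$ to get the three cases $n<b$, $n>b$, $n=b$. Your treatment of well-definedness via $\psi_{L/K}(n')\leq\psi_{L/K}(n)-1$ is, if anything, slightly more explicit than the paper's.
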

\begin{proof}
    Since $p\nmid n$, there exist a unique pair $(i,m)$ with  $1\leq i\leq p-1$, $m\in \bZ$
    such that $n=bi+pm$. Every element of $(\caM_L^{-n}+K)/(\caM_L^{-n+1}+K)$ can be represented by $\mu y^i$ for some $\mu\in K$ with $v_K(\mu)=-m$.
    Since 
 $\wp(\mu y^i)=\mu^p(y+\beta)^i-\mu y^i=\mu^p\binom{i}{1}\beta^{i-1}y+(\mu^p-\mu)y^i\pmod{(\mu^p\beta^{i-1}y+\mu y)\caM_K+K}$, we find that 
  \[\wp: \frac{\caM_L^{pm-ib}+K}{\caM_L^{pm-ib+1}+K}\longrightarrow
  \begin{cases}
        \frac{\caM_L^{pm-ib}+K}{\caM_L^{pm-ib+1}+K}&\mbox{for }pm-ib\geq -b,\\
    \frac{\caM_K^{p^2m-(i-1)pb-b}+K}{\caM_K^{p^2m-(i-1)pb-b+1}+K}&\mbox{for }pm-ib< -b,
  \end{cases}\]
  from this the result follows. 
\end{proof}
  \begin{corollary}\label{>-b}
   If $\ell\in L$ satisfies $v_L(\ell)>-b$ then $\ell\in L^\wp+K$ and $\df_{L/K}(\ell)=\infty$.
\end{corollary}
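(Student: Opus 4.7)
The plan is a finite induction on $-v_L(\ell)$, using Lemma \ref{image} and the fact, proved for $K$ in \eqref{wp} but equally valid for any local field of characteristic $p$, that $\caM_L \subseteq L^\wp$. Each induction step produces an element of strictly larger valuation lying in the same coset modulo $L^\wp + K$ until we reach $\caO_L$, at which point the base case closes the argument.

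For the base case, suppose $v_L(\ell) \geq 0$. If $v_L(\ell) \geq 1$, then $\ell \in \caM_L \subseteq L^\wp$. If $v_L(\ell) = 0$, total ramification of $L/K$ means the residue fields of $L$ and $K$ coincide, so some $k \in \caO_K$ gives $\ell - k \in \caM_L \subseteq L^\wp$. Either way $\ell \in L^\wp + K$.

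For the inductive step, suppose $v_L(\ell) = -n$ with $0 < n < b$, and assume the statement for all elements of $L$ with strictly larger $v_L$. If $p \mid n$, write $n = pm$; since $v_L(\pi_K) = p$, the element $\pi_K^m \ell$ lies in $\caO_L^{\times}$, and lifting its residue to $c \in \caO_K$ produces $\ell - c\pi_K^{-m} \in \caM_L^{-n+1}$. If instead $p \nmid n$, the inequality $n < b$ gives $\psi_{L/K}(n) = n$, so Lemma \ref{image} (which applies precisely because $n \neq b$) supplies $\ell' \in L$ with $\wp(\ell') \equiv \ell \pmod{\caM_L^{-n+1} + K}$. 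In either branch, $\ell$ is congruent modulo $L^\wp + K$ to an element of $\caM_L^{-n+1}$, whose valuation is still strictly larger than $-b$, and the induction hypothesis closes the argument.

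The main point needing care is termination: because the valuation strictly increases by at least one at each step, after at most $n$ iterations we reach the base case, so no completeness argument is required. The conclusion $\ell \in L^\wp + K$ means $\df_{L/K}(\ell) = \infty$ by definition.
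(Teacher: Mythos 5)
Your proof is correct and matches the paper's (implicit) derivation: the corollary is stated without proof as an immediate consequence of Lemma \ref{image}, and the successive approximation you spell out --- surjectivity of $\wp$ on the graded pieces for $p\nmid n$ with $n<b$ (where $\psi_{L/K}(n)=n$), absorbing the $p\mid n$ levels into $K$ via $c\pi_K^{-m}$, and landing in $\caO_L\subseteq K+\caM_L\subseteq K+L^\wp$ --- is exactly the intended argument. The only cosmetic caveat is that the inclusion $\caM_L\subseteq L^\wp$ itself rests on completeness of $L$, so completeness is not entirely avoided; what your finite induction removes is only the need for a limiting argument in the reduction itself.
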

\begin{corollary}\label{-b}
  If $\ell\in L$ satisfies $v_L(\ell)=-b$ then $\ell\in \omega y+y\caM_L$ for some $\omega\in \caO_K/\caM_K$. In this situation,\begin{center}
  $\ell\in L^\wp+K$ and $\df_{L/K}(\ell)=\infty$ if and only if $\omega\in (\caO_K/\caM_K)^\wp$.\end{center}
 \end{corollary}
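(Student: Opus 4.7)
The plan is to combine a simple structural observation about residue fields with an explicit computation of $\wp$ on the $K$-basis $\{1,y,\ldots,y^{p-1}\}$ of $L$. For the existence of $\omega$: because $L/K$ is totally ramified with common residue field $\caO_K/\caM_K$ and $v_L(y)=-b$, any $\ell\in L$ with $v_L(\ell)=-b$ satisfies $\ell/y\in\caO_L^\times$ and hence reduces to some $\omega\in\caO_K/\caM_K$; fixing a lift of $\omega$ to $\caO_K$ yields $\ell\in\omega y+y\caM_L$.

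For the easy direction $(\Leftarrow)$, the key identity is $\wp(\omega_0 y)=(\omega_0^p-\omega_0)y+\omega_0^p\beta$ for any $\omega_0\in\caO_K$, which follows from $y^p=y+\beta$ and Frobenius additivity in characteristic $p$. If $\omega\equiv\wp(\omega_0)\pmod{\caM_K}$, then $\omega y-\wp(\omega_0 y)+\omega_0^p\beta=(\omega-\omega_0^p+\omega_0)y\in y\caM_K\subseteq\caM_L^{-b+1}$, so by Corollary \ref{>-b} this remainder lies in $L^\wp+K$, putting $\omega y\in L^\wp+K$. Applying Corollary \ref{>-b} once more to $\ell-\omega y\in y\caM_L$ then yields $\ell\in L^\wp+K$, so $\df_{L/K}(\ell)=\infty$.

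For the substantive direction $(\Rightarrow)$, Corollary \ref{>-b} applied to $\ell-\omega y\in y\caM_L$ first reduces the problem to showing that $\omega y=\wp(l)+k$ with $l\in L$, $k\in K$ forces $\bar\omega\in(\caO_K/\caM_K)^\wp$. I would expand $l=\sum_{i=0}^{p-1}l_iy^i$ in the $K$-basis of $L$ and, using Frobenius additivity together with $(y+\beta)^i=\sum_j\binom{i}{j}y^j\beta^{i-j}$, rearrange $\wp(l)$ by powers of $y$ and equate coefficients against $\omega y-k$. The coefficient equation at $y^j$ for $j\ge 2$ takes the form $l_j=l_j^p+\sum_{i>j}\binom{i}{j}l_i^p\beta^{i-j}$, which a downward induction on $j$ solves: the $j=p-1$ equation forces $l_{p-1}\in\bF_p$, and assuming $l_{j+1}=\cdots=l_{p-1}=0$ the $j$-th equation gives $l_j\in\bF_p$ while the $(j-1)$-th becomes $\wp(l_{j-1})=-jl_j\beta$. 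Since $\beta$ has $v_K(\beta)=-b$ coprime to $p$, no nonzero $\bF_p^\times$-multiple of $\beta$ lies in $K^\wp$ (a solution $\wp(x)=c\beta$ in $K$ would require $v_K(x)=-b/p\notin\bZ$), so $l_j=0$. Once $l_j=0$ for every $j\ge 2$, the $y^1$-coefficient equation collapses to $\omega=\wp(l_1)\in K^\wp$, and since $\omega\in\caO_K$ the same valuation argument places $l_1\in\caO_K$; reducing modulo $\caM_K$ gives $\bar\omega=\wp(\bar{l_1})\in(\caO_K/\caM_K)^\wp$.

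The main obstacle is the bookkeeping of the reverse induction in $(\Rightarrow)$; the crucial leverage is that elements of $K\subset L$ have $v_L$-valuation divisible by $p$ while $\beta$ has $v_K$-valuation coprime to $p$, which together obstruct any nonzero $\bF_p^\times$-multiple of $\beta$ from lying in $K^\wp$.
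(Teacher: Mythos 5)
Your proof is correct, but it reaches the conclusion by a different route than the paper. The paper states Corollary \ref{-b} as an immediate consequence of Lemma \ref{image}: the $n=b$ case of that lemma identifies the kernel of $\wp$ on the graded piece $(\caM_L^{-b}+K)/(\caM_L^{-b+1}+K)$ as $\bF_p y+y\caM_L+K$ via the single computation $\wp(\mu y)\equiv\wp(\mu)y\pmod{y\caM_L+K}$, and the corollary is then read off from the kernel/image of the induced map $\wp$ on the residue field together with the isomorphisms on all the other graded pieces (which rule out contributions from elsewhere in the filtration). Your $(\Leftarrow)$ direction is exactly that one computation. Your $(\Rightarrow)$ direction replaces the graded, Hasse--Herbrand-indexed bookkeeping by a fully explicit expansion $l=\sum_{i=0}^{p-1}l_iy^i$ in the $K$-basis of $L$, coefficient comparison, and a downward induction killing $l_{p-1},\ldots,l_2$ using the fact that no nonzero $\bF_p^\times$-multiple of $\beta$ lies in $K^\wp$ (valuation $-b$ coprime to $p$); this is more elementary and self-contained, at the cost of reproving by hand what Lemma \ref{image} packages once for all $n$ (and which the paper also needs for Corollaries \ref{>-b} and \ref{complement}). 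Two cosmetic points: at the last induction step $j=2$ the ``$(j-1)$-th equation'' is the $y^1$-coefficient equation and reads $\wp(l_1)=\omega-2l_2\beta$ rather than $\wp(l_1)=-2l_2\beta$, but since $v_K(\omega)\geq 0>-b$ the valuation argument is unaffected; and membership of $\omega y$ in $L^\wp+K$ is independent of the chosen lift of $\omega$ because two lifts differ by an element of $y\caM_K$, which Corollary \ref{>-b} absorbs, as you implicitly use.
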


 \begin{corollary}\label{complement}
   If $\ell\in L$ satisfies $v_L(\ell)<-b$ and either
   \[v_L(\ell)\not\equiv -b \bmod p\quad
   \mbox{ or }\quad
   v_L(\ell)\equiv (p-1)b\bmod p^2,\]
   then 
$\df_{L/K}(\ell)=   v_L(\ell)$.
 \end{corollary}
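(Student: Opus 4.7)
The plan is to derive the conclusion directly from Lemma \ref{image}. Set $m := -v_L(\ell) > b$ and consider the quotient $Q_m := (\caM_L^{-m}+K)/(\caM_L^{-m+1}+K)$, in which the class $[-\ell]$ is nonzero. I aim to show that the image of $\wp\colon L \to Q_m$ does not contain $[-\ell]$; this precludes any $\mu \in L$, $k \in K$ with $v_L(\ell+\wp(\mu)+k)>-m$, giving $\df_{L/K}(\ell) = v_L(\ell)$.

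Using that $\wp$ is $\bF_p$-linear and that any $\xi \in L$ with $v_L(\xi) < 0$ divisible by $p$ differs from an element of $K$ by one of strictly higher $L$-valuation (since $L/K$ is totally ramified of degree $p$), I first reduce to the case of a homogeneous $\mu$ with $v_L(\mu) = -n$ for a single positive integer $n$ coprime to $p$. By Lemma \ref{image}, the class of $\wp(\mu)$ modulo $K$ lives at level $\psi_{L/K}(n)$, so the image of $\wp(L)+K$ in $Q_m$ is nonzero precisely when $m = \psi_{L/K}(n)$ for some positive integer $n$ coprime to $p$.

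For $m > b$ this forces $n > b$ and yields $n = b + (m-b)/p$. Integrality gives $m \equiv b \pmod{p}$, equivalently $v_L(\ell) \equiv -b \pmod{p}$; coprimality of $n$ with $p$ unwinds to $m \not\equiv (1-p)b \pmod{p^2}$, equivalently $v_L(\ell) \not\equiv (p-1)b \pmod{p^2}$. The two hypotheses in the corollary are precisely the negations of these two requirements, so under either hypothesis no admissible $n$ exists, the image of $\wp$ in $Q_m$ is zero, and $[-\ell] \neq 0$ cannot be hit.

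The main delicacy I expect is the preliminary reduction to a homogeneous $\mu$. Since $\wp$ is additive and Lemma \ref{image} pins the image of each homogeneous piece of $\mu$ to a single target level, only the unique piece (if any) landing at level $m$ contributes to the class in $Q_m$; the arithmetic of $\psi_{L/K}$ detailed above then finishes the argument.
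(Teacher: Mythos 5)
Your argument is correct and is precisely the derivation from Lemma \ref{image} that the paper intends (the paper states this corollary without proof): reduce to homogeneous pieces of valuation coprime to $p$, note that $\wp$ shifts level $n$ to level $\psi_{L/K}(n)$, and check that under either hypothesis no admissible $n$ satisfies $\psi_{L/K}(n)=-v_L(\ell)$. The only caveat, shared by the statement itself, is the tacit assumption $p\nmid v_L(\ell)$ (needed for your claim that $[-\ell]\neq 0$ in $Q_m$), which holds in every application in the paper.
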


 \subsection{$L/K$-reductions of the terms in $\eus(x_1,x_2)$ for $\gD,\gH,\gM$}\label{subsect}

 Since $S(y_1,\beta_1)$ and $S(y_2,\beta_2)$ are associated with
 cyclic extensions of degree $p^2$ and ramification in cyclic
 extensions is well-understood, our focus in this section will be the
 $L/K$-reduction of $\pm\beta_2y_1$. Our first result decomposes
 $\beta_2$ into powers of $\beta_1$.
\begin{lemma}\label{decomp}
   Without loss of generality, the AS-generator $\beta_2$ satisfying $p\nmid v_K(\beta_2)\leq v_K(\beta_1)<0$, can be expressed as
\[
  \beta_2=\mu_0^p+\sum_{i=1}^{p-1}\mu_i^p\beta_1^i
\]
for some $\mu_i\in K$ such that
\begin{enumerate}[label=\alph*)]
   \item $\mu_0\in\caO_K/\caM_L$ and either $\mu_0\not\in\{ \wp(\omega):\omega\in \caO_K/\caM_K\}$ or $\mu_0=0$, and 
    \item for $1\leq i\leq p-1$, either $v_K(\mu_i^p\beta_1^i)<0$ or $\mu_i=0$.
       \end{enumerate}
Additionally, if $v_K(\beta_2)=v_K(\beta_1)$, we may suppose
$\mu_1\in\caO_M\setminus(\bF_p+\caM_K)$.
  \end{lemma}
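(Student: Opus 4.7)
The plan is to exploit that $\{1,\beta_1,\ldots,\beta_1^{p-1}\}$ is a $K^p$-basis of $K$, giving a unique decomposition of $\beta_2$, and then to use the freedom to adjust $\beta_2$ within the coset $\beta_2+K^\wp$ to enforce the listed conditions.

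First I would verify the basis claim. Since $K$ has characteristic $p$ with perfect residue field, $[K:K^p]=p$. The powers $\beta_1^i$, $i=0,\ldots,p-1$, have valuations $-ib_1$ that are pairwise distinct modulo $p$ (because $p\nmid b_1$), while every nonzero element of $K^p$ has valuation divisible by $p$; so the family is $K^p$-linearly independent, and by dimension count forms a basis. Hence $\beta_2=\sum_{i=0}^{p-1}\mu_i^p\beta_1^i$ uniquely with $\mu_i\in K$.

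To enforce (b): suppose $1\le i\le p-1$ and $\mu_i\ne 0$ with $v_K(\mu_i^p\beta_1^i)\ge 0$. Since this valuation is $\equiv -ib_1\not\equiv 0\pmod p$, it is strictly positive, placing $\mu_i^p\beta_1^i\in\caM_K\subseteq K^\wp$. As $\wp:\caM_K\to\caM_K$ is bijective (by \eqref{wp} and completeness), there is $\nu_i\in\caM_K$ with $\wp(\nu_i)=\mu_i^p\beta_1^i$. Replacing $\beta_2$ by $\beta_2-\wp(\nu_i)$ changes only the $i$-th slot of the $K^p$-basis decomposition, setting it to zero; iterating over bad indices yields (b). To enforce (a): when $v_K(\mu_0)<0$, use $\mu_0^p=\mu_0+\wp(\mu_0)$ to trade $\mu_0^p$ for $\mu_0$ modulo $K^\wp$, and redecompose $\mu_0=\sum c_j^p\beta_1^j$. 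Because $(x+y)^p=x^p+y^p$ in characteristic $p$, the updated slots are $(c_0,\mu_1+c_1,\ldots,\mu_{p-1}+c_{p-1})$, and a valuation comparison gives $v_K(c_0)\ge\lceil v_K(\mu_0)/p\rceil>v_K(\mu_0)$. Any $j\ge 1$ slot made bad by cancellation is killed by reapplying the (b)-reduction, which crucially does not disturb the constant slot; iterating until $\mu_0\in\caO_K$, then (if $\bar\mu_0\in\wp(k)$) using $\mu_0-\wp(c)\in\caM_K\subseteq K^\wp$ for a lift $c$ of a $\wp$-preimage of $\bar\mu_0$ to set $\mu_0=0$, completes (a).

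For the extra condition when $v_K(\beta_2)=v_K(\beta_1)=-b_1$, the unique $i\ge 1$ with slot valuation $\equiv -b_1\pmod p$ is $i=1$, forcing $v_K(\mu_1)=0$; reducedness combined with (b) forces $v_K(\mu_i^p\beta_1^{i-1})>0$ for $i\ge 2$, so $\overline{\beta_2/\beta_1}=\bar\mu_1^p$. The hypothesis of Corollary \ref{ram-as-gen} is precisely that $\overline{\beta_2/\beta_1}\notin\bF_p$; since Frobenius fixes $\bF_p$ pointwise and is injective on the perfect field $k$, we obtain $\bar\mu_1\notin\bF_p$, i.e., $\mu_1\in\caO_K\setminus(\bF_p+\caM_K)$. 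The main subtlety is proving the interleaved (a)/(b)-reduction actually terminates: because every (b)-step leaves the constant slot unchanged, $v_K(\mu_0)$ is a nonpositive integer that strictly increases at each outer (a)-step, so no cycling can occur and the process halts in finitely many steps.
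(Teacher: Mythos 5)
Your proof is correct and follows essentially the same route as the paper's: decompose $\beta_2$ over the $K^p$-basis $\{1,\beta_1,\dots,\beta_1^{p-1}\}$, then normalize the slots by adding elements of $K^\wp$, using $\mu_0^p=\mu_0+\wp(\mu_0)$ to drive the constant slot into $\caO_K$ and $\caM_K\subseteq K^\wp$ to kill slots of positive valuation. You are somewhat more careful than the paper about termination of the iteration and you actually prove the final assertion about $\mu_1$ (via $\overline{\beta_2/\beta_1}=\bar\mu_1^p$ and injectivity of Frobenius), which the paper's proof leaves implicit, relying on the linear-independence normalization from Corollary \ref{ram-as-gen}.
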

\begin{proof}
  Observe that $K^p=\{\mu^p:\mu\in K\}$ is a subfield of $K$. Furthermore,
  $K=K^p(\beta_1)$ is a field extension of $K^p$ of degree $p$.
  Thus $1,\beta_1,\ldots, \beta_1^{p-1}$ is a basis for $K/K^p$ and there exists  $\mu_i\in K$ such that
  \[\beta_2=\sum_{i=0}^{p-1}\mu_i^p\beta_1^i.\]
However
  since $\beta_2$ is an AS-generator, we are only concerned with this statement as a congruence
  \begin{equation}\label{mu_0}
    \beta_2\equiv\mu_0^p+\sum_{i=1}^{p-1}\mu_i^p\beta_1^i\pmod{K^{\wp}}.\end{equation}
Consider the term $\mu_0^p$. If $v_K(\mu_0^p)<0$, then since
$\mu_0^p=\mu_0+\wp(\mu_0)$, we may express
$\mu_0=\sum_{i=0}^{p-1}(\mu_i')^p\beta_1^i$ for some $\mu'_i\in K$,
and find that
$\beta_2\equiv(\mu'_0)^p+\sum_{i=1}^{p-1}(\mu_i+\mu_i')^p\beta_1^i\pmod{K^{\wp}}$
where $v_K(\mu_0')>v_K(\mu_0)$. Repeat this process and relabel, until $v_K(\mu_0^p)\geq 0$.
Now if $\mu_0^p\equiv \wp(\omega)\pmod{\caM_K}$ for some $\omega\in \caO_K/\caM_K$, we set $\mu_0^p=0$. At this point, \eqref{mu_0} holds with $\mu_0\in \caO_K/\caM_K$
and either
$\mu_0\not\in\{ \wp(\omega):\omega\in \caO_K/\caM_K\}$ or $\mu_0=0$.
To finish up, we observe that
    since $\caM_K\subset K^\wp$, if
$v_K(\mu_i^p\beta_1^i)>0$ for some $1\leq i\leq p-1$, we may set $\mu_i=0$.
  \end{proof}

Our approach towards determining $\df_{K(y_1)/K}(\beta_2y_1)$
depends upon Lemma \ref{decomp} and \eqref{val}.
First we address the easy case when $p=2$.

\begin{proposition}\label{p=2}
   Assume $p=2$, and $y_1,y_2,\beta_1,\beta_2$ as above with
   $\beta_2$ as in Lemma \ref{decomp}.
   Then $\beta_2 y_1$ and $(\beta_1+\beta_2)y_1$ are $K(y_1)/K$-reduced with
  \[v_{K(y_1)}(\beta_2 y_1)=v_{K(y_1)}((\beta_1+\beta_2)y_1)=-2u_2-u_1.\]
  Similarly, $\beta_2 y_2$ is $K(y_2)/K$-reduced with $v_{K(y_2)}(\beta_2 y_2)=-3u_2$.
  \end{proposition}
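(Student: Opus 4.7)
The plan is to reduce each claim to a direct valuation computation followed by an application of Corollary~\ref{complement}. Since $K(y_i)/K$ is a totally ramified Artin--Schreier extension with break $u_i$, the relation $\wp(y_i)=\beta_i$ with $v_K(\beta_i)=-u_i<0$ forces $v_{K(y_i)}(y_i)=-u_i$. Multiplying out gives $v_{K(y_1)}(\beta_2y_1)=2v_K(\beta_2)+v_{K(y_1)}(y_1)=-2u_2-u_1$ and $v_{K(y_2)}(\beta_2y_2)=-3u_2$.

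For the hybrid element $(\beta_1+\beta_2)y_1$, I would split on whether $u_2>u_1$ or $u_2=u_1$. In the first case, strict inequality of $K$-valuations gives $v_K(\beta_1+\beta_2)=-u_2$ immediately. In the second case, Lemma~\ref{decomp} writes $\beta_2=\mu_0^2+\mu_1^2\beta_1$ with $\mu_1\in\caO_K\setminus(\bF_p+\caM_K)$; since $\bF_p=\{0,1\}$ for $p=2$, the residue of $\mu_1^2$ is neither $0$ nor $1$, so $1+\mu_1^2\in\caO_K^\times$ and $v_K(\beta_1+\beta_2)=v_K((1+\mu_1^2)\beta_1)=-u_1=-u_2$. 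Either way $v_{K(y_1)}((\beta_1+\beta_2)y_1)=-2u_2-u_1$. This equal-breaks subcase is really the only delicate step, as it relies on the $\bF_p$-linear-independence normalization built into Lemma~\ref{decomp} to rule out accidental cancellation.

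Finally, I would invoke Corollary~\ref{complement} with $L=K(y_i)$ and $b=u_i$ to upgrade these valuations to $L/K$-reductions. All three valuations are strictly below $-u_i$ since $u_1,u_2>0$. The first alternative of the corollary, noncongruence modulo $p$, fails in each case because the differences from $-u_i$ are even, so I would rely on the mod-$p^2$ alternative $v_L(\ell)\equiv(p-1)u_i\equiv u_i\pmod 4$. The relevant differences $-2u_2-u_1-u_1=-2(u_1+u_2)$ and $-3u_2-u_2=-4u_2$ are both divisible by $4$ because $u_1$ and $u_2$ are odd, being coprime to $p=2$.
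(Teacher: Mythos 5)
Your proof is correct and follows essentially the same route as the paper's: compute the valuations directly, use the normalization $\mu_1\notin\bF_2+\caM_K$ from Lemma \ref{decomp} to rule out cancellation in $\beta_1+\beta_2$ when $u_1=u_2$, and then apply Corollary \ref{complement} (via its second, mod-$p^2$ alternative, exactly as you note). The paper's proof is just a terser version of the same argument.
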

\begin{proof}
  The results follow from Corollary \ref{complement} once it is observed that if
  $v_K(\beta_1)= v_K(\beta_2)$ then
  $\mu_1\not\in \bF_2+\caM_K$, and thus $v_K(\beta_1+\beta_2)=v_K(\beta_2)$.
 \end{proof}

Now we address the general case.
\begin{proposition}\label{p>2}
  Assume $p>2$, and $y_1,y_2,\beta_1,\beta_2$ as above with
   $\beta_2$ as in Lemma \ref{decomp}.
   Set
\[r=-v_K\left(\sum_{i=1}^{p-2}\mu_i^p\beta_1^i \right),\qquad s=-v_K(\mu_{p-1}^p\beta_1^{p-1}).\]
Observe
 $s\equiv -u_1\bmod p$, $r\not\equiv 0,-u_1 \bmod p$ and
$u_2=\max\{r,s\}$. Then
\[v_{K(y_1)}(\URA{\beta_2y_1}{K(y_1)/K})=\df_{K(y_1)/K}(\beta_2 y_1)=-\max\{ps+u_1,pr-(p-2)u_1\}.\]
  \end{proposition}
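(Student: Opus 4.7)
The strategy is to apply Lemma~\ref{decomp} to write $\beta_2 y_1 = \mu_0^p y_1 + \sum_{i=1}^{p-1} \mu_i^p \beta_1^i y_1$, and then reduce each summand modulo $L^\wp + K$ (where $L = K(y_1)$) by exploiting the relation $y_1^p = y_1 + \beta_1$. Setting $n_i = -v_K(\mu_i^p \beta_1^i) > 0$ whenever $\mu_i \neq 0$, the bare $v_L$-valuation of $\mu_i^p \beta_1^i y_1$ is $-pn_i - u_1$. The $\mu_0^p y_1$ contribution sits at $v_L = -u_1$ (or is zero) and, by Corollary~\ref{-b} combined with Lemma~\ref{decomp}'s constraint on $\mu_0$, is $L/K$-reduced; however, one checks that its valuation is dominated by the others in every case.

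The central manipulation, valid for $i \in \{1, \ldots, p-2\}$ where $(i+1)^{-1} \in \bF_p$ exists, is to expand
\[\wp\bigl((i+1)^{-1}\mu_i y_1^{i+1}\bigr) = (i+1)^{-1}\mu_i^p (y_1 + \beta_1)^{i+1} - (i+1)^{-1}\mu_i y_1^{i+1}\]
via the binomial theorem. The $k=0$ piece of the expansion lies in $K$, the $k=1$ piece equals $\mu_i^p \beta_1^i y_1$ exactly, and every remaining piece has $v_L > -pn_i - u_1$. This lets me replace $\mu_i^p \beta_1^i y_1$ modulo $L^\wp + K$ by a remainder whose leading surviving term has $v_L = -pn_i + (p-2)u_1$ (coming from $k=2$, or from the $\wp(\mu_i)y_1^{i+1}$ correction when $i=1$). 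Summing over $i \in \{1,\ldots,p-2\}$, and noting that the $n_i$ are distinct modulo $p$ since $n_i \equiv iu_1 \pmod{p}$ and $\gcd(u_1,p)=1$, the dominant leftover occurs at the unique $i^*$ with $n_{i^*}=r$ and sits at $-pr + (p-2)u_1$. For $i=p-1$ the identity fails because $p^{-1}$ is undefined, so $\mu_{p-1}^p \beta_1^{p-1} y_1$ is left intact at $v_L = -ps - u_1$.

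To conclude, apply Corollary~\ref{complement} twice. First, $-ps-u_1 \equiv -u_1 \pmod p$, but substituting $s \equiv -u_1 \pmod p$ yields $-ps-u_1 \equiv (p-1)u_1 \pmod{p^2}$, so the second clause applies and $\mu_{p-1}^p \beta_1^{p-1} y_1$ is already $L/K$-reduced. Second, $-pr + (p-2)u_1 \equiv -2u_1 \not\equiv -u_1 \pmod p$, so the first clause applies and this leftover is also $L/K$-reduced. The two surviving irreducible valuations $-(ps+u_1)$ and $-(pr-(p-2)u_1)$ lie in distinct residue classes modulo $p$, hence cannot cancel; each is more negative than $-u_1$ whenever it contributes, so the $\mu_0^p y_1$ piece is always dominated. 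Hence $\df_{L/K}(\beta_2 y_1)$ equals the more negative of the two, namely $-\max\{ps+u_1,\, pr-(p-2)u_1\}$. The main technical obstacle will be the bookkeeping: I must verify that the leading coefficient of the leftover at $-pr+(p-2)u_1$ is genuinely nonzero in the quotient $(\caM_L^{-N'}+K)/(\caM_L^{-N'+1}+K)$ with $N'=pr-(p-2)u_1$, and that the lower-order leftovers from neighboring indices $i \neq i^*$ do not conspire to produce a spurious cancellation at any of the two dominant valuations.
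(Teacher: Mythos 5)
Your proposal is correct and follows essentially the same route as the paper: the same decomposition of $\beta_2$ from Lemma \ref{decomp}, the same key expansion of $\wp\bigl((i+1)^{-1}\mu_i y_1^{i+1}\bigr)$ to trade $\mu_i^p\beta_1^i y_1$ for higher-valuation remainders, and the same appeals to Corollaries \ref{-b} and \ref{complement} plus distinctness of valuations to rule out cancellation. The one imprecision is your claim that the leading surviving remainder always sits at $v_L=-pn_i+(p-2)u_1$: for summands with $v_K(\mu_i^p\beta_1^i)>v_K(\beta_1)$ (i.e.\ $n_i<u_1$) the dominant remainder is instead the $-(i+1)^{-1}\mu_i y_1^{i+1}$ term at $v_L=-n_i-u_1$ --- the paper tracks this by splitting the sum into $A+B$ --- but since those remainders all lie strictly above $-2u_1$ and are dominated by whichever of $-(ps+u_1)$, $-(pr-(p-2)u_1)$ achieves the minimum, your conclusion is unaffected.
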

 \begin{proof}
   Let $L=K(y_1)$.
   Based upon Lemma \ref{decomp}, summands $\{\mu_i^p\beta_1^i\}_{i\neq 0}$
   in $\beta_2$ are either zero or have valuation $v_L(\mu_i^p\beta_1^i)< 0$.
   Decompose the sum
     $\sum_{i=1}^{p-2}\mu_i^p\beta_1^i=A+B$ where $A$ includes those summands
  satisfying $v_K(\mu_i^p\beta_1^i)\leq v_K(\beta_1)$ and $B$ the nonzero summands satisfying $v_K(\beta_1)< v_K(\mu_i^p\beta_1^i)<0 $.
  Thus
  \[\beta_2=A+\mu_{p-1}^p\beta_1^{p-1}+B+\mu_0^p.\]
  Since $v_K(\beta_2)\leq v_K(\beta_1)=-u_1$,
  \[v_K(\beta_2)=\min\{v_K(A),v_K(\mu_{p-1}^p\beta^{p-1})\}.\]
  At least one of $v_K(A),v_K(\mu_{p-1}^p\beta_1^{p-1})$ must be $\leq -u_1$.
  At least one of $A,\mu_{p-1}\in K$ is nonzero.

  To determine $\df_{L/K}(\beta_2y_1)$ we use 
  \eqref{val} and  consider the following $L/K$-group valuations:
\[\df_{L/K}(Ay_1),\quad \df_{L/K}(\mu_{p-1}^p\beta_1^{p-1}y_1), \quad\df_{L/K}(By_1)\quad\mbox{ and }\quad
\df_{L/K}(\mu_0^py_1).\] Two are easy to analyze.
\begin{itemize}
\item Consider $\df_{L/K}(\mu_{p-1}^p\beta_1^{p-1}y_1)$ and suppose $\mu_{p-1}\neq 0$. Then
    $v_L(\mu_{p-1}^p\beta_1^{p-1}y_1)\equiv
  (p-1)u_1 \bmod p^2$. Thus
  by
  Corollary
  \ref{complement}, $\mu_{p-1}^p\beta_1^{p-1}y_1$ is $L/K$-reduced and
  \[\df_{L/K}(\mu_{p-1}^p\beta_1^{p-1}y_1)=v_L(\mu_{p-1}^p\beta_1^{p-1}y_1)
    \leq -(p+1)u_1.\]
\item Consider $\df_{L/K}(\mu_0^py_1)$.
    Since either $\mu_0\not\in\{ \wp(\omega):\omega\in \caO_K/\caM_K\}$ or $\mu_0=0$, Corollary \ref{-b} states that
    \[-u_1\leq \df_{L/K}(\mu_{0}^py_1).\]
    \end{itemize}
The remaining two $L/K$-group valuations are more involved. However, once we prove that
if $A\neq 0$, then
\[\df_{L/K}(Ay_1)\equiv -2u_1\bmod p\quad\mbox{ and }\quad \df_{L/K}(Ay_1)\leq -2u_1,\]
while
$-2u_1< \df_{L/K}(By_1)$, we will be able to conclude that
\begin{equation}\label{goal}
  \df_{L/K}(\beta_2y_1)=\min\{\df_{L/K}(\mu_{p-1}^p\beta_1^{p-1}y_1),\df_{L/K}(Ay_1)\}.\end{equation}

We start by supposing that 
         $\mu\neq 0$ and $1\leq i\leq p-2$, and then expanding $\wp(\mu y_1^{i+1})=\mu^p(y_1+\beta_1)^{i+1}-\mu y_1^{i+1}$ to find that
  \[\wp(\mu y_1^{i+1})=\mu^p\beta_1^{i+1}+\mu^p(i+1)\beta_1^iy_1+
  \mu^p\sum_{j=2}^{i+1}\binom{i+1}{j}y_1^j\beta_1^{i+1-j}-\mu y_1^{i+1}.\]
  Since $2\leq i+1<p$, we may solve for $\mu^p\beta_1^iy_1$ finding that
  \[\mu^p\beta_1^iy_1\equiv \frac{i}{2}\mu^py_1^2\beta_1^{i-1}-\frac{\mu y_1^{i+1}}{i+1}\pmod{\mu^py_1^2\beta_1^{i-1}\caM_L+L^\wp+K}.\]
  Observe that
  \[v_K\left(\frac{i}{2}\mu^py_1^2\beta_1^{i-1}\right)<v_K\left(\frac{\mu y_1^{i+1}}{i+1}\right)\iff v_K(\mu^p\beta_1^i)< v_K(\beta_1).\]
  As a result, for $1\leq i\leq p-2$ and $\mu\neq 0$, while setting
  $\caL=L^\wp+K$ 
  \begin{equation}\label{relate}
    \mu^p\beta_1^iy_1\equiv\begin{cases} \frac{i}{2}\mu^py_1^2\beta_1^{i-1}\pmod{\mu^py_1^2\beta_1^{i-1}\caM_L+\caL} & v_K(\mu^p\beta_1^i)< v_K(\beta_1),\\
 \frac{\wp(\mu)}{2}y_1^2 \pmod{y_1^2\caM_L+\caL}& v_K(\mu^p\beta_1^i)= v_K(\beta_1),\\
 -\frac{\mu y_1^{i+1}}{i+1}\pmod{\mu y_1^{i+1}\caM_L+\caL} & v_K(\mu^p\beta_1^i)> v_K(\beta_1).\end{cases}\end{equation}

Now we apply
\eqref{relate}. Suppose that $A\neq 0$
and separate the cases: $v_K(A)<v_K(\beta_1)$ vs.~$v_K(A)=v_K(\beta_1)$.
In the first case $v_K(A)<v_K(\beta_1)$, let
  $\mu_i^p\beta_1^iy_1$ be any
nonzero summand of
  $Ay_1$ such that $v_L(\mu_i^p\beta_1^i)<v_L(\beta_1)$,
then by  \eqref{relate} it
  is congruent modulo $L^\wp+K$ to a term of valuation
  $v_L(\mu_i^p\beta_1^{i-1}y_1^2)<-2u_1$. Moreover, using Corollary
  \ref{complement} $\mu_i^p\beta_1^{i-1}y_1^2$ has largest valuation in the
  coset of $L^\wp+K$ that it represents. Thus 
  \[\df_{L/K}(\mu_i^p\beta_1^iy_1)=v_L(\mu_i^p\beta_1^i)+(p-2)u_1<-2u_1.\]
  If there is a summand such that $v_L(\mu_i^p\beta_1^i)=v_L(\beta_1)$, then $i=1$ and $v_K(\mu_i)=0$ and
  $\mu_i^p\beta_1^iy_1$ is
  congruent modulo $L^\wp+K$ to a term of valuation
  $-2u_1\leq v_L(\wp(\mu_i)y_1^2)$. Using \eqref{val}, we conclude that
  \[\df_{L/K}(Ay_1)=v_L(A)+(p-2)u_1<-2u_1.\]

  The second case $v_K(A)=v_K(\beta_1)$ occurs when $A$ has only one
  summand $\mu_i^p\beta_1^i$ with $i=1$ and $v_K(\mu_i)=0$. This means that
  $v_K(A)=v_K(\beta_2)$ and
  \[\beta_2\equiv \mu_1^p\beta_1\bmod \beta_1\caM_K.\]
  Since $u_1=-v_K(\beta_1)=-v_K(\beta_2)=u_2$, there is only one ramification break in the $C_p^2$-extension $M/K$, every nontrivial $\bF_p$-linear combination of $\beta_1$ and $\beta_2$ has the same valuation, $\mu_1\in\caO_K\setminus(\bF_p+\caM_K)$, and thus $v_K(\wp(\mu_1))=0$. In this  case,
  \[\df_{L/K}(Ay_1)=v_L(A)+(p-2)u_1=-2u_1.\]

  Finally, we apply \eqref{relate} to $By_1$.
  The condition $v_K(\mu^p\beta_1^i)> v_K(\beta_1)$ is equivalent to
  $v_L(\mu y_1^i)> v_K(y_1)=-u_1$. Thus
  each summand
  $\mu_i^p\beta_1^iy_1$ of $By_1$, is congruent modulo $L^\wp+K$ to a
  term of valuation $v_L(-\mu y_1^{i+1})$, which satisfies $-2u_1<v_L(-\mu y_1^{i+1})$. So
    \[-2u_1< \df_{L/K}(By_1).\]
    The result now follows from \eqref{goal} and $\df_{L/K}(Ay_1)=v_L(A)+(p-2)u_1$.
\end{proof}

 We now record results that involve $S(y_1,\beta_1)$ or $S(y_2,\beta_2)$.
 
 \begin{proposition}\label{S(x_2)}
   Assume $p>2$, and $y_1,y_2,\beta_1,\beta_2$ as above with
   $\beta_2$ as in Lemma \ref{decomp}.  Then
   $S(y_2,\beta_2)$ is $K(y_2)/K$-reduced with
   $v_L(S(y_2,\beta_2))=-(p^2-p+1)u_2$.  Set $r,s$ as in Proposition
   \ref{p>2}. Then
\[\df_{K(y_1)/K}(-\beta_2 y_1+S(y_1,\beta_1))=-\max\{(p^2-p+1)u_1,ps+u_1,pr-(p-2)u_1\},\]
except when $\mu_{p-1}+1\in  \caM_K$.

When $\mu_{p-1}+1\in \caM_K$,
 $s=(p-1)u_1$ is fixed. Set
\[t=-v_K((\mu_{p-1}+1)^p\beta^{p-1})<s.\]
Note that $t\equiv -u_1\bmod p$.
Then
\[\df_{K(y_1)/K}(-\beta_2 y_1+S(y_1,\beta_1))=-\max\{(p^2-2p+2)u_1,pt+u_1,pr-(p-2)u_1\}.\]
  \end{proposition}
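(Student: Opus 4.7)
I would address the two assertions in turn.

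For $S(y_2,\beta_2)$, expanding the Witt polynomial yields
\[ S(y_2,\beta_2) = -\sum_{k=1}^{p-1}\tfrac{1}{p}\tbinom{p}{k}\, y_2^k \beta_2^{p-k}, \]
whose summands have distinct valuations $v_{L_2}(y_2^k\beta_2^{p-k}) = -u_2(p^2-(p-1)k)$ in $L_2=K(y_2)$, each coefficient $-\tfrac{1}{p}\tbinom{p}{k}$ being a $p$-adic unit in $\caO_K$. By Corollary \ref{complement}, each summand is $L_2/K$-reduced: the $k=1$ term because its valuation is $\equiv (p-1)u_2\pmod{p^2}$, and the others because $-ku_2\not\equiv -u_2\pmod p$ for $2\le k\le p-1$ with $p>2$. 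Via \eqref{val}, $\df_{L_2/K}(S(y_2,\beta_2))=-(p^2-p+1)u_2$, attained at the $k=1$ summand.

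For the main assertion, set $L=K(y_1)$ and expand both $-\beta_2 y_1$ (via Lemma \ref{decomp}) and $S(y_1,\beta_1)$. The key collision is at the monomial $y_1\beta_1^{p-1}$: the $i=p-1$ summand $-\mu_{p-1}^p\beta_1^{p-1}y_1$ of $-\beta_2 y_1$ merges with the $k=1$ summand $-y_1\beta_1^{p-1}$ of $S(y_1,\beta_1)$ into $-(1+\mu_{p-1})^p\, y_1\beta_1^{p-1}$, using $(a+b)^p=a^p+b^p$ in characteristic $p$. This combined term is $L/K$-reduced via Corollary \ref{complement}, with valuation $-(p^2-p+1)u_1$ in Case A ($\mu_{p-1}+1\not\in\caM_K$) and $-pt-u_1$ in Case B (with $\epsilon=\mu_{p-1}+1\in\caM_K$, $t=-v_K(\epsilon^p\beta_1^{p-1})$). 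The remaining contributions are: (a) $-\mu_0^p y_1$ with $\df_{L/K}\in\{-u_1,\infty\}$ by Corollary \ref{-b} and Lemma \ref{decomp}(a) (using Frobenius-invariance of $\wp(\caO_K/\caM_K)$), negligible in magnitude; (b) the terms $-\mu_i^p\beta_1^i y_1$ for $1\le i\le p-2$ contribute $\df_{L/K}=-pr+(p-2)u_1$ from the dominant $Ay_1$ part via the reduction \eqref{relate}, exactly as in Proposition \ref{p>2}; (c) the residual $S$-terms $c_k y_1^k\beta_1^{p-k}$ for $2\le k\le p-1$ are each $L/K$-reduced by Corollary \ref{complement} (since $-ku_1\not\equiv -u_1\pmod p$), with the most negative valuation $-(p^2-2p+2)u_1$ attained at $k=2$.

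To conclude via \eqref{val}, I verify pairwise distinct valuations. Residues modulo $p$ partition the pieces into the class $\equiv -u_1$ (combined leading and $\mu_0$ term) and the classes $\equiv -ku_1$ for $k=2,\dots,p-1$ (with the $Ay_1$-reduction and the $S_{k=2}$ term both lying in the class $\equiv -2u_1$). Refinement modulo $p^2$ separates the combined leading ($\equiv (p-1)u_1$) from the $\mu_0$ term ($\equiv -u_1$), and separates the $Ay_1$-reduction from $S_{k=2}$ using $r\not\equiv -u_1\pmod p$ from Proposition \ref{p>2}. Thus $\df_{L/K}$ of the full sum equals the minimum valuation among the pieces. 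In Case A, the combined leading at $-(p^2-p+1)u_1$ dominates the $S$-rest at $-(p^2-2p+2)u_1$, yielding the first formula; the entry $ps+u_1$ is included for uniform notation, always $\le (p^2-p+1)u_1$ (and absent when $\mu_{p-1}=0$). In Case B the combined leading has relaxed to $-pt-u_1$, permitting $-(p^2-2p+2)u_1$ to compete in the max, yielding the second formula. The main obstacle is the residue bookkeeping modulo $p^2$ to rule out hidden valuation coincidences among the dominant pieces; a short Diophantine check ($\gcd(i+1,p)=1$ for $1\le i\le p-2$) prevents any reduction of $-\mu_i^p\beta_1^i y_1$ from ever landing at $S_{k=2}$'s valuation.
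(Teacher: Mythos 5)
Your treatment of $S(y_2,\beta_2)$ is fine, and your overall strategy for the main assertion is essentially the paper's: isolate the collision of $-\mu_{p-1}^p\beta_1^{p-1}y_1$ with the leading term $-\beta_1^{p-1}y_1$ of $S(y_1,\beta_1)$ into $-(1+\mu_{p-1})^p\beta_1^{p-1}y_1$, reduce the remaining pieces via Corollary \ref{complement} and \eqref{relate}, and separate valuations by congruences modulo $p$ and $p^2$. (The paper packages this as $-\beta_2y_1+S(y_1,\beta_1)=C-D$ with $C$ the combined term plus the residual $S$-terms and $D=Ay_1+By_1+\mu_0^py_1$, then shows $\df_{L/K}(C)\neq\df_{L/K}(D)$.)

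There is, however, a genuine error in your Case A. You assert that the combined term has valuation $-(p^2-p+1)u_1$ and that $ps+u_1\le(p^2-p+1)u_1$ always, dismissing the entry $ps+u_1$ as ``uniform notation.'' This fails whenever $v_K(\mu_{p-1})<0$, which Lemma \ref{decomp} permits and which is exactly the case $s>(p-1)u_1$: take for instance $\beta_1=\pi_K^{-u_1}$ and $\beta_2=\pi_K^{-pc}\beta_1^{p-1}$ with $c>0$, so that $\mu_{p-1}=\pi_K^{-c}$ and $u_2=s=pc+(p-1)u_1$. Then $v_L\bigl(-(1+\mu_{p-1})^p\beta_1^{p-1}y_1\bigr)=-(ps+u_1)<-(p^2-p+1)u_1$, the entry $ps+u_1$ genuinely realizes the maximum, and your reasoning would return the strictly larger, hence wrong, value $-(p^2-p+1)u_1$. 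The correct statement, and the one the paper uses, is that the combined term has valuation $\min\{-(ps+u_1),\,-(p^2-p+1)u_1\}$; since both candidates are congruent to $(p-1)u_1$ modulo $p^2$, Corollary \ref{complement} still applies to it and the rest of your congruence bookkeeping goes through unchanged. With that one correction your argument matches the paper's proof. (A minor further caveat: your item (b) equates the contribution of the degree-$1$ through degree-$(p-2)$ terms with $-pr+(p-2)u_1$, which is literally true only when the part $A$ with $v_K(\mu_i^p\beta_1^i)\le v_K(\beta_1)$ is nonzero; when $A=0$ this entry never attains the maximum, so the imprecision is harmless.)
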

 \begin{proof}
   Since $v_{K(y_i)}(S(y_i,\beta_i))=-(p^2-p+1)u_i$, we conclude from
   Corollary \ref{complement} that $\df_{K(y_i)/K}(S(y_i,\beta_i))=-(p^2-p+1)u_i$.

   Now consider
   $-\beta_2y_1+S(y_1,\beta_1)=C-D$
 where
 \[C=-\mu_{p-1}^p\beta_1^{p-1}y_1+S(y_1,\beta_1)=-(\mu_{p-1}^p+1)\beta_1^{p-1}y_1-\sum_{i=1}^{p-2}\frac{1}{p}\binom{p}{i}\beta_1^iy_1^{p-i}\]
 and $D=Ay_1+By_1+\mu_0^py_1$ is known from Proposition
 \ref{p>2} to satisfy $\df_{K(y_1)/K}(D)=-(pr-(p-2)u_1)$.
 Suppose $\mu_{p-1}+1\not\in\caM_K$. Then
based upon Corollary \ref{complement},
\[\df_{K(y_1)/K}(C)=\min\{v_{K(y_1)}(-\mu_{p-1}^p\beta_1^{p-1}y_1),v_{K(y_1)}(S(y_1,\beta_1))\}\equiv (p-1)u_1\bmod p^2,\]
and
thus
$\df_{K(y_1)/K}(C)\neq
\df_{K(y_1)/K}(D)$. The first statement follows.
On the other hand, if $\mu_{p-1}+1=\tau\in\caM_K$ then
$v_L(C)=\min\{v_L(\tau^p\beta_1^{p-1}y_1), v_L(\beta_1^{p-2}y_1^2)\}\equiv (p-1)u_1$ or $(2p-2)u_1\bmod p^2$. Observe that
$-(pr-(p-2)u_1)\equiv -2u_1\bmod p$ and
since $r\not\equiv (p-1)u_1\bmod p$, 
$-(pr-(p-2)u_1)\not\equiv (2p-2)u_1\bmod p^2$.
Again
$\df_{K(y_1)/K}(C)\neq
\df_{K(y_1)/K}(D)$. The second statement follows.
 \end{proof}
 \begin{remark}\label{=same-as>}
   When $p=2$ the sum that produces $r$ is an empty sum. Thus $s=u_2$
   and the value of $\df_{K(y_1)/K}(\beta_2y_1)$ given in Proposition
   \ref{p>2} agrees with the value in Proposition \ref{p=2}.  The same
   can be said for the values of
   $\df_{K(y_1)/K}(-\beta_2y_1+S(y_1,\beta_1))$ and
   $\df_{K(y_1)/K}(S(y_2,\beta_2))$ given in Proposition \ref{S(x_2)}.
   They also agree with the values in Proposition \ref{p=2}.
 \end{remark}

  \subsection{Decomposition and reduction of $\eus(x_1,x_2)$ for $\gQ$}\label{Q-subsect}
 In this case, $\chr(K)=2$ and since $x_i=y_i$ for $i=1,2$,
 we also have $u_i=-v_K(\kappa_i)$. We will continue to use $x_i$ and $\kappa_i$
 (rather than $y_i$ and $\beta_i$). Recall that 
$N=M(x_3)$ where $\wp(x_3)=(\kappa_1+\kappa_2)x_1+\kappa_2x_2+\kappa_3$ for some $\kappa_3$. In fact, as $\kappa_3$ ranges over all of $K$, $M(x_3)$ ranges over all totally ramified quaternion extensions of $K$. We are interested in determining a lower bound on the ramification break of $M(x_3)/M$. Thus we are interested in 
$\max\{v_M(t):t\in \eus'+M^{\wp}+K\}$ for
\[\eus'=\eus(x_1,x_2)=(\kappa_1+\kappa_2)x_1+\kappa_2x_2.\]

Using Lemma \ref{decomp}, we have
\begin{equation}\label{kappa2}
\kappa_2=\mu^2\kappa_1+\mu_0^2
\end{equation}
where $-v_K(\mu)=m\geq 0$ and $\mu_0\in \caO_K/\caM_K$. Notice that
\[u_2=u_1+2m.\]
If $m=0$ then since
$\kappa_1,\kappa_2$ are linearly independent in
$\kappa_1\caO_K/\kappa_1\caM_K$, we have $\wp(\mu)\neq 0$, which means that $\wp(\mu)$ is a unit
in $\caO_K/\caM_K$.
Set \begin{equation}\label{X}
  X=x_2-\mu x_1
\end{equation}
  and observe that
$\wp(X)=-\wp(\mu) x_1+\mu_0^2$.
This means that $X$ satisfies an Artin-Schreier polynomial over $L=K(x_1)$.
Since $v_L(-\wp(\mu) x_1+\mu_0^2)=pv_K(\wp(\mu))-u_1\not\equiv 0\bmod 2$, we
see that $L(X)/L$ is a $C_2$-extension with ramification break
$b=u_1+4m$ and
\[v_M(X)=-(u_1+4m).\]

Using \eqref{kappa2} and \eqref{X},
replace $\kappa_2$ and $x_2$ in $\eus'$ so that
$\eus'=\eus_1'+\eus_2'$ with
\begin{align*}
  \eus_1'&=(1+\mu^2+\mu^3)\kappa_1 x_1+\mu_0^2(1+\mu) x_1\in L=K(x_1),\\
  \eus_2'&=(\mu^2\kappa_1+\mu_0^2)X\in M=K(x_1,x_2).
\end{align*}
Observe that
\[v_M(\eus_2')=-(5u_1+12m), \quad\mbox{and if $m>0$ then }
v_L(\eus_1')=-(3u_1+6m),\]
but that the determination of $v_L(\eus_1')$ is not so clear when $m=0$.

To clarify matters when $m=0$, we
replace $\mu^3\kappa_1x_1$ in the expression for $\eus_1'$, by expanding
$\wp(\mu x_1X+\mu X)$ to find that
\[\mu^3\kappa_1x_1\equiv \mu^4\kappa_1x_1+\mu^2\mu_0^2x_1+\mu^2\kappa_1X+\wp(\mu)x_1X+\wp(\mu)X\pmod{M^\wp+K}.\]
Thus $\eus'\equiv \eus\pmod{M^\wp+K}$ 
where $\eus=\eus_1+\eus_2$ with
\begin{align*}
  \eus_1&=(1+\mu^2+\mu^4)\kappa_1 x_1+\mu_0^2(1+\mu+\mu^2) x_1\in L=K(x_1),\\
  \eus_2&=\wp(\mu)x_1X+\wp(\mu)X+\mu_0^2X\in M=K(x_1,x_2).
\end{align*}
When $m=0$, express $\mu=\omega+\epsilon$ for some $\omega\in (\caO_K/\caM_K)\setminus\bF_2$ and some $\epsilon\in\caM_K$ with $v_K(\epsilon)=e>0$.
Since $\wp(\omega)$ is a unit, $v_K(\eus_2)=-3u_1$.
Observe that if $e\geq u_1/2$ then $(\omega+\epsilon)^2\kappa_1\equiv \omega^2\kappa_1\bmod \caM_K\subseteq K^\wp$ and thus, since we are only interested in $\kappa_2=\mu^2\kappa+\mu_0^2\bmod K^\wp$, we may set $\epsilon=0$. Without loss of generality, we conclude that either $0<e<u_1/2$ or $\epsilon=0$. Replace $\mu$ in the expression for $\eus_1$:
\[
  \eus_1=(1+\omega+\omega^2)^2\kappa_1x_1+(1+\omega+\omega^2)\mu_0^2x_1+(\epsilon+\epsilon^2)^2\kappa_1x_1+(\epsilon+\epsilon^2)\mu_0^2x_1.
  \]
  It is clear that $v_L(\eus_1)$ depends upon whether $\omega^3\neq 1$
  or $\omega^3=1$.  If $\omega^3\neq 1$, then $v_L(\eus_1)= -3u_1$ as
  $m=0$. If $\omega^3=1$, then
  \[
  \eus_1=(\epsilon+\epsilon^2)^2\kappa_1x_1+(\epsilon+\epsilon^2)\mu_0^2x_1
  \]
  and $v_L(\eus_1)=-3u_1+4e$. Note that $\eus_1=0$ if  $\omega^3=1$ and $v_K(\epsilon)=e>u_1/2$.
Altogether, this means that when $m=0$, 
\[v_M(\eus_2)=-3u_1, \quad\mbox{and}\quad
v_L(\eus_1)=\begin{cases}
-3u_1 &\omega^3\neq 1,\\
-3u_1+4e&\omega^3= 1\mbox{ and } \eus_1\neq0.\end{cases}\]

We consolidate this information in a proposition.
\begin{proposition}\label{Q8-s1-s2}
  Let $p=2$. Let $M/K$ be a totally ramified $C_p^2$ extension with upper ramification numbers $u_1\leq u_2$. Set $M=K(x_1,x_2)$
  where $\wp(x_i)=\kappa_i$ with
  $v_k(\kappa_i)=u_i$. Since $u_1,u_2$ are odd, $\kappa_2\equiv\mu^2\kappa_1\bmod\caO_M$ for some $\mu\in K$ with $v_K(\mu)=-m\leq 0$. If $m=0$ then
  $\mu=\omega+\epsilon$ for some $\omega\in \caO_K/\caM_K$ and $\epsilon\in \caM_K$ with $v_K(\epsilon)=e$. Let $L=K(x_1)$. Then there exist $\eus_1\in L$ and $\eus_2\in M$ such that for $u_2\neq u_1$ or equivalently $m>0$,
  \[v_L(\eus_1)=-3u_2\mbox{ and }v_M(\eus_2)=-(6u_2-u_1).\]
For $u_2=u_1$ or equivalently $m=0$, we have $v_M(\eus_2)=-3u_1$. And
unless $\eus_1=0$, we have
  \[v_L(\eus_1)=\begin{cases}-3u_1 &\mbox{ if }\omega^3\neq 1,\\
  -3u_1+4e &\mbox{ if $\omega^3= 1$ and $0<e<u_1/2$}.
  \end{cases}\]
    Note that for $\eus_1\neq 0$, the inequality
  $v_L(\eus_1)<-u_1$ holds.
  Finally,
  every $\gQ$-extension $N/K$ that contains $M$ is expressible as $N=M(x_3)$ where $\wp(x_3)=\eus_1+\eus_2+\kappa_3$ for some $\kappa_3\in K$.
\end{proposition}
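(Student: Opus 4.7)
The plan is to repackage the calculations performed in the paragraphs immediately preceding the statement. The starting point is the Artin-Schreier generator $\eus'=(\kappa_1+\kappa_2)x_1+\kappa_2x_2$ from Theorem~\ref{as-gen}. Applying Lemma~\ref{decomp} gives $\kappa_2\equiv \mu^2\kappa_1+\mu_0^2\pmod{K^\wp}$ with $v_K(\mu)=-m\le 0$ and $\mu_0\in\caO_K/\caM_K$, and the change of variable $X=x_2-\mu x_1$ produces a second Artin-Schreier generator for the tower with $v_M(X)=-(u_1+4m)$ (read off from $\wp(X)=\wp(\mu)x_1+\mu_0^2$, noting $\chr K=2$).

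For $u_2\ne u_1$ (equivalently $m>0$), I would substitute $x_2=X+\mu x_1$ into $\eus'$ and collect terms along the decomposition $M=L\oplus LX$. This yields $\eus'=\eus_1'+\eus_2'$ with $\eus_1'\in L$ and $\eus_2'\in M$, and because $\mu$ is a nonunit, the dominant summand of each piece is unambiguous: $\mu^3\kappa_1 x_1$ inside $\eus_1'$ and $\mu^2\kappa_1 X$ inside $\eus_2'$. A direct valuation count then gives $v_L(\eus_1')=-(3u_1+6m)=-3u_2$ and $v_M(\eus_2')=-(5u_1+12m)=-(6u_2-u_1)$, and one sets $\eus_i=\eus_i'$.

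For $u_2=u_1$ (equivalently $m=0$) the decomposition is not yet clean, because $\mu^3\kappa_1 x_1$ and the $\mu^2\kappa_1 X$-part of $\eus_2'$ produce terms of comparable $M$-valuation. The fix is to push $\mu^3\kappa_1 x_1$ across $M^\wp$ using the explicit expansion of $\wp(\mu x_1X+\mu X)$, producing $\eus=\eus_1+\eus_2$ in which the $\kappa_1 x_1$-coefficient of $\eus_1$ becomes $(1+\mu+\mu^2)^2$ and the leading $X$-coefficient of $\eus_2$ is $\wp(\mu)$. Writing $\mu=\omega+\epsilon$, the $\bF_2$-linear independence of $\kappa_1,\kappa_2$ forces $\omega\notin\bF_2$, so $\wp(\omega)$ is a unit and $v_M(\eus_2)=-3u_1$. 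The value of $v_L(\eus_1)$ then depends on whether $1+\omega+\omega^2=0$: if $\omega^3\ne1$ the $\omega$-contribution already gives $v_L(\eus_1)=-3u_1$; if $\omega^3=1$ only the $\epsilon$-contribution survives and the leading term $(\epsilon+\epsilon^2)^2\kappa_1 x_1$ yields $v_L(\eus_1)=-3u_1+4e$, after first normalizing $\epsilon$ modulo $K^\wp$ so that $0<e<u_1/2$ (justified by $(\omega+\epsilon)^2\kappa_1\equiv\omega^2\kappa_1\pmod{\caM_K}$ once $e\ge u_1/2$). The final claim, that every $\gQ$-extension of $K$ containing $M$ has $N=M(x_3)$ of the stated form, is immediate from Theorem~\ref{as-gen} by letting $\kappa_3\in K$ vary.

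The main bookkeeping obstacle is the $m=0$, $\omega^3=1$ subcase, where the leading $\omega$-terms of both $1+\mu^2+\mu^4=(1+\omega+\omega^2)^2+(\epsilon+\epsilon^2)^2$ and $1+\mu+\mu^2=(\omega+\epsilon)+(\omega+\epsilon)^2$ collapse simultaneously; one must carefully isolate the $\epsilon$-tail, verify that the residual $\mu_0^2$-term has strictly higher valuation, and justify the normalization killing $\epsilon$ modulo $K^\wp$ when $e\ge u_1/2$. Once these details are laid out, the stated inequality $v_L(\eus_1)<-u_1$ for $\eus_1\ne 0$ follows in each subcase by inspection.
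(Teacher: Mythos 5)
Your proposal is correct and follows essentially the same route as the paper: the substitution $X=x_2-\mu x_1$ from Lemma \ref{decomp}, the splitting $\eus'=\eus_1'+\eus_2'$ with the valuation count for $m>0$, the adjustment of $\mu^3\kappa_1x_1$ modulo $M^\wp+K$ via $\wp(\mu x_1X+\mu X)$ to handle $m=0$, and the $\omega^3=1$ subcase with the normalization of $\epsilon$ when $e\geq u_1/2$. The only (harmless) difference is that you state explicitly that the unadjusted $\eus_i'$ are used when $m>0$, which the paper leaves implicit.
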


\section{Ramification theory}\label{linking}
Recall the notation thus far: $N/K$ is a totally ramified nonabelian
extension with Galois group $G=\Gal(N/K)$ generated by
$\sigma_1,\sigma_2$ where $\sigma_3=[\sigma_1,\sigma_2]$ fixes a
subfield $M$.  The fixed field $M=N^{\sigma_3}$ was initially
expressed as $M=K(x_1,x_2)$ when we were solely concerned with Galois
action. Now that we need the ramification breaks to be involved, we
set $M=K(y_1,y_2)$
such that $\wp(y_i)=\beta_i$ and $u_i=-v_K(\beta_i)$ for $i=1,2$.
Recall that we proved that if
$u_1\leq u_2\leq u_3$ denote the upper and
$l_1\leq l_2\leq l_3$ the lower ramification breaks of $N/K$, then
the lower and upper ramification breaks of $M/K$ are
$l_1\leq l_2$ and $u_1\leq u_2$, respectively.
To determine $l_3$,
ramification break of $N/M$, so that $u_3$ is determined by
$l_3-l_2=p^2(u_3-u_2)$, we are examine 
the ramification break of the
$C_p$-extension $M(x_3)/M$ where \[\wp(x_3)=\eus(x_1,x_2)+\kappa_3\]
with $\eus(x_1,x_2)$ first described in Theorem \ref{ram-as-gen} then translated/adjusted into \eqref{s(y,y)} and 
\eqref{gD,gM-adjust}. Finally,
using the results of
  \S\ref{subsect} and \S\ref{Q-subsect}, namely Propositions \ref{p=2},
  \ref{p>2}, \ref{S(x_2)} and \ref{Q8-s1-s2}, we find that $\eus(x_1,x_2)$ can be replaced modulo $M^\wp+K$ by
  \[
  \eus(x_1,x_2)\equiv\begin{cases}
-\URA{\beta_2y_1}{L/K}& \gH,\\
\URA{-\beta_2y_1+S(y_1,\beta_1)}{L/K}& \gD,\gM, (u_2=u_1\mbox{ or }G_{l_2}=\langle \sigma_1^p,\sigma_2\rangle),\\
\URA{\beta_2y_1}{L/K}+S(y_2,\beta_2)& \gD,\gM, u_2\neq u_1\mbox{ and }G_{l_2}=\langle \sigma_1\rangle,\\
\eus_1+\eus_2& \gQ.
  \end{cases}\]
This leads naturally to an interest in the ramification breaks of the following auxiliary $C_p$-extensions:
\begin{itemize}
\item $L(z_0)/L$ for $z_0\in K^\sep$ such that $\wp(z_0)=\URA{\beta_2y_1}{L/K}$,
\item $L(z_1)/L$ for $z_1\in K^\sep$ such that $\wp(z_1)=\URA{-\beta_2y_1+S(y_1,\beta_1)}{L/K}$,
\item $K(y_2,z_2)/K(y_2)$ for $z_2\in K^\sep$ such that $\wp(z_2)=S(y_2,\beta_2)$,
\item $L(z_3)/L$ for $z_4\in K^\sep$ such that $\wp(z_3)=\eus_1$ if $\eus_1\neq 0$,
\item $M(z_4)/M$ for $z_4\in K^\sep$ such that $\wp(z_4)=\eus_2$,
\end{itemize}
which we attach
to a diagram of $M/K$, and where
for easy reference, we  label each $C_p$-subextension with its ramification break. The purpose of this diagram is to help us determine the ramification break of $M(x_3)/M$ when $x_3$ as in \eqref{x_3}. Thus we add the extension $M(x_3)/M$ to our diagram to remind us of the ``target'' in this exercise.
\begin{center}
    \begin{tikzpicture}

    \node (Q1) at (0,0) {$\mathbf{K}$};
    \node (Q2) at (2,2) {$\mathbf{K(y_2)}$};
    \node (Q3) at (0,4) {\hspace*{2cm}$\mathbf{M=K(y_1,y_2)}$};
    \node (Q4) at (-2,2) {$\mathbf{L=K(y_1)}$};
        \node (Q6) at (0,6) {$N=M(x_3)$};
    \node (Q7) at (-3,4) {$L(z_1)$};
    \node (Q8) at (-4.5,4) {$L(z_0)$};
    \node (Q9) at (4,4) {$K(y_2,z_2)$};
    \node (Q10) at (-1.8,4) {$L(z_3)$};
    \node (Q11) at (2.25,6) {$M(z_4)$};

    \draw (Q1)--(Q2) node [pos=0.5, above,inner sep=0.25cm] {$\scriptstyle{u_2}$};
    \draw (Q1)--(Q4) node [pos=0.7, below,inner sep=0.25cm] {$\scriptstyle{u_1}$};
    \draw (Q4)--(Q3) node [pos=0.7, below,inner sep=0.25cm] {$\scriptstyle{l_2}$};
    \draw (Q2)--(Q3) node [pos=0.7, below,inner sep=0.25cm] {$\scriptstyle{u_1}$};
        \draw[dotted] (Q3)--(Q6) node [pos=0.6, right,inner sep=0.15cm] {$\scriptstyle{l_3}$};
    \draw (Q4)--(Q7) node [pos=0.35, above,inner sep=0.35cm] {$\scriptstyle{t_1}$};
    \draw (Q4)--(Q8) node [pos=0.5, above,inner sep=0.15cm] {$\scriptstyle{t_0}$};
    \draw (Q2)--(Q9) node [pos=0.7, below,inner sep=0.25cm] {$\scriptstyle{t_2}$};
    \draw (Q4)--(Q10) node [pos=2.6, below,inner sep=2.65cm] {$\scriptstyle{t_3}$};
    \draw (Q3)--(Q11) node [pos=0.7, below,inner sep=0.25cm] {$\scriptstyle{t_4}$};

    \end{tikzpicture}
\end{center}
Using
Propositions \ref{p=2},
  \ref{p>2} and \ref{S(x_2)},
   the ramification breaks are determine and recorded below. Note that by
  Remark \ref{=same-as>}, these expressions for the ramification breaks hold for $p=2$ as well as for $p>2$.
\begin{align*}
  t_0&=\max\{ps+u_1,pr-(p-2)u_1\},\\
  t_1&=\max\{(p^2-p+1)u_1,ps+u_1,pr-(p-2)u_1\},\mbox{ or}\\
  &\max\{(p^2-2p+2)u_1,pt+u_1,pr-(p-2)u_1\}\mbox{ and }s=(p-1)u_1,\\
  t_2&=(p^2-p+1)u_2,\\
  t_3&=3u_2\mbox{ if }u_2\neq u_1,\mbox{ otherwise }t_3>u_1,\\
  t_4&=6u_2-u_1 \mbox{ if }u_2\neq u_1,\mbox{ otherwise }t_3=3u_1.\\
  \end{align*}
    Because the elements $\URA{\beta_2y_1}{L/K}$,
    $\URA{-\beta_2y_1+S(y_1,\beta_1)}{L/K}$, $\eus_1$ and
    $S(y_2,\beta_2)$ all lie within proper subfields of $M$, while we
    are interested in the $M$-group valuation of these elements, we record the following:
    \begin{lemma}
The inequalities $t_0,t_1,t_3>l_2$ and $t_2>u_1$ hold.
    \end{lemma}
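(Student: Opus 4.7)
The plan is to reduce every inequality to elementary arithmetic using the Hasse–Herbrand conversion
\[
l_2 \;=\; u_1 + p(u_2 - u_1) \;=\; pu_2 - (p-1)u_1
\]
for the $C_p^2$-extension $M/K$, together with the identity $u_2 = \max\{r,s\}$ recorded after Lemma \ref{decomp}. Once $l_2$ is expressed this way, each bound becomes a direct comparison of closed-form expressions in $u_1,u_2,r,s$.

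For $t_0 > l_2$ I would split on whether $u_2 = s$ or $u_2 = r$. When $u_2 = s$, the summand $ps + u_1 = pu_2 + u_1$ of $t_0$ exceeds $l_2$ by $pu_1$; when $u_2 = r$, the summand $pr - (p-2)u_1 = pu_2 - (p-2)u_1$ exceeds $l_2$ by $u_1$. The inequality $t_1 > l_2$ follows in the first formulation by termwise dominance, since every summand of $t_0$ appears in $t_1$. In the second formulation (triggered by $\mu_{p-1}+1\in\caM_K$, which forces $s=(p-1)u_1$), the case $u_2=r$ is identical, while in the remaining case $u_2 = s = (p-1)u_1$ one computes $l_2 = (p-1)^2 u_1$, and the term $(p^2-2p+2)u_1$ in $t_1$ exceeds it by exactly $u_1$.

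The bound $t_2 > u_1$ is immediate from $t_2 = (p^2-p+1)u_2 \geq 3u_1 > u_1$. For $t_3 > l_2$, which pertains only to the $\gQ$ case (so $p=2$ and $\eus_1\neq 0$), I would split on $u_2 \neq u_1$ versus $u_2 = u_1$: in the first subcase $t_3 = 3u_2$ and $l_2 = 2u_2 - u_1$, giving $t_3 - l_2 = u_2 + u_1 > 0$; in the second subcase $l_2 = u_1$, and the desired inequality is precisely the statement $t_3 > u_1$ already recorded in Proposition \ref{Q8-s1-s2}.

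No step is genuinely hard; the only mild obstacle is keeping the bookkeeping straight across the two formulations of $t_1$ and the corresponding subcases of the ramification filtration for $\gD,\gM$, so I would organize the writeup around a small case-split ensuring that every combination of the relevant conditions $u_1=u_2$ vs.\ $u_1<u_2$ and $\mu_{p-1}+1\in\caM_K$ vs.\ $\mu_{p-1}+1\notin\caM_K$ is addressed.
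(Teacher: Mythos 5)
Your proof is correct and follows essentially the same route as the paper's: express $l_2$ as $pu_2-(p-1)u_1$ via Hasse--Herbrand, split on $u_2=r$ versus $u_2=s$ (and on the two formulations of $t_1$, where the case $u_2=s=(p-1)u_1$ gives $l_2=(p-1)^2u_1<(p^2-2p+2)u_1$), and handle $t_3$ by the dichotomy $u_2\neq u_1$ versus $u_2=u_1$. You simply carry out explicitly the arithmetic that the paper's terser proof leaves as "checking the cases," and you use the correct formula $l_2=pu_2-(p-1)u_1$ where the paper's proof contains the typo $pu_2-(p-1)u_2$.
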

    \begin{proof}
      Prove $t_0>l_2=pu_2-(p-1)u_2$ by checking the cases: $u_2=r$ and $u_2=s$. Next we prove $t_1>l_2$, which is clear when $t_1=\max\{(p^2-p+1)u_1,ps+u_1,pr-(p-2)u_1\}$. So consider the case when
      $t_1=\max\{(p^2-2p+2)u_1,pt+u_1,pr-(p-2)u_1\}$ and $s=(p-1)u_1$.
      If $u_2=r$, then $t_1>l_2$ follows as before.
      On the other hand, if $u_2=s=(p-1)u_1$, then $l_2=p(p-1)u_1-(p-1)u_1<(p^2-2p+2)u_1$, which also gives $t_1>l_2$.
      Prove $t_3>l_2$ by checking the cases $u_2=u_1$ and $u_2>u_1$.
      Finally, $t_2>u_1$ because $t_2=(p^2-p+1)u_2\geq 3u_2$.
      \end{proof}
Now that we have established these inequalities,
    we need a well-known
    result.
\begin{lemma}\label{C_p^2-breaks}
  Let $M/L$ be a ramified $C_p$-extension with the ramification break $l$. Let $\alpha\in L$ be $L$-reduced: $\df_{L}(\alpha)=v_L(\alpha)=-a<0$ and $p\nmid a$. Suppose
  \[a>l.\] Let $z\in L^\sep$ such that
  $\wp(z)=\alpha$.  Then $M(z)/M$ is a ramified $C_p$-extension with
  ramification break $pa-(p-1)l$ and \[\df_{M}(\alpha)=-(pa-(p-1)l).\]
  Otherwise if $a\leq l$ and 
  $M(z)/M$ is nontrivial, the ramification
  break of $M(z)/M$ is less than or equal to $l$.
 Sharper upper bounds than this exist, but this is enough for our purpose.
\end{lemma}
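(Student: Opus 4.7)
The plan is to pass to the compositum $M(z) = M \cdot L(z)$, which is either equal to $M$ (the trivial case) or is a $C_p \times C_p$-extension of $L$, and then apply Hasse-Arf together with the standard Hasse-Herbrand machinery. First, since $\alpha \in L$ is $L$-reduced with $v_L(\alpha) = -a$ and $p \nmid a$, the Artin-Schreier correspondence applied inside $L$ makes $L(z)/L$ a ramified $C_p$-extension with ramification break $a$. If $L(z) \neq M$, then these two Galois $C_p$-extensions of $L$ are linearly disjoint over $L$, so $\Gal(M(z)/L) \cong C_p \times C_p$, with $M/L$ and $L(z)/L$ as its two distinguished $C_p$-subextensions carrying upper ramification breaks $l$ and $a$ respectively.

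Hasse-Arf forces the upper breaks of this abelian extension to be integers, and the Herbrand formula yields lower breaks $l_1 = \min(a,l)$ and $l_2 = \min(a,l) + p|a - l|$. The central identification is of which order-$p$ subgroup of $\Gal(M(z)/L)$ equals $\Gal(M(z)/M)$: via $(G/H)^u = G^u H / H$, the order-$p$ upper ramification subgroup $G^{u_1^+}$ is precisely the kernel of the projection onto the quotient carrying the \emph{smaller} upper break $u_1$. This single fact will carry the rest of the argument.

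Assembling the cases: when $a > l$ we have $u_1 = l$ matching $M/L$, so $\Gal(M(z)/M) = G^{u_1^+} = G_{l_1 + 1}$; this subgroup equals $G_i$ for all $l_1 < i \leq l_2$ and vanishes above $l_2$, so the break of $M(z)/M$ is $l_2 = l + p(a - l) = pa - (p-1)l$. The identity $\df_M(\alpha) = -(pa - (p-1)l)$ is then the translation of this break under the Artin-Schreier correspondence in $M$, since a nontrivial $C_p$-extension $M(w)/M$ with AS-generator $\beta$ has ramification break $-\df_M(\beta)$. When $a = l$, a single upper break of multiplicity two produces a single lower break equal to $l$, so every $C_p$-subextension of $M(z)/L$, including $M(z)/M$, has break $l$. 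When $a < l$, instead $u_1 = a < u_2 = l$, and $M$ now corresponds to the \emph{larger} upper break, so $\Gal(M(z)/M)$ is a distinct order-$p$ subgroup from $G^{u_1^+}$; intersecting with the lower filtration collapses it above $l_1$, giving break $l_1 = a \leq l$.

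The main obstacle is keeping straight which order-$p$ subgroup of the $C_p \times C_p$ Galois group fixes $M$, since the correspondence between upper ramification subgroups and $C_p$-subextensions pairs the smaller upper break with the \emph{upper} order-$p$ subgroup, which is opposite to the naive expectation. Once this identification is pinned down via $(G/H)^u = G^u H / H$, the remainder is routine Herbrand bookkeeping, and the separate case analysis collapses into three brief computations.
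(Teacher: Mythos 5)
Your main case $a>l$ is correct and takes essentially the same route as the paper: form the $C_p^2$-extension $M(z)/L$, read off its upper breaks $l<a$, convert to lower breaks $l$ and $l+p(a-l)$ by Herbrand, and intersect the lower filtration with $\Gal(M(z)/M)$. Your explicit identification of $\Gal(M(z)/M)$ with $G^{u}$ for $l<u\leq a$ via $(G/H)^u=G^uH/H$ (the kernel of the projection onto the quotient carrying the \emph{smaller} upper break) is exactly the point the paper leaves implicit, and you have it right; the translation of the resulting break into $\df_M(\alpha)=-(pa-(p-1)l)$ is also fine since $pa-(p-1)l\equiv l\not\equiv 0\bmod p$. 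The case $a<l$ is likewise handled correctly.

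The case $a=l$, however, is argued incorrectly. You claim that a single upper break of multiplicity two occurs, so that every $C_p$-subextension of $M(z)/L$, including $M(z)/M$, has break $l$. This presumes that all $p+1$ degree-$p$ quotients of $\Gal(M(z)/L)$ have break $l$, but the diagonal quotients have Artin--Schreier generators $\alpha+i\beta$ (where $\wp(y)=\beta$ generates $M/L$), and when $v_L(\alpha)=v_L(\beta)=-l$ the ultrametric inequality for $\df_L$ need not be an equality: $\df_L(\alpha+i\beta)$ can be strictly larger than $-l$. Concretely, take $L=\bF_2((\pi))$, $\beta=\pi^{-3}$, $\alpha=\pi^{-3}+\pi^{-1}$; then $a=l=3$, but $L(z+y)/L$ has break $1$, so $M(z)/L$ has upper breaks $1<3$, the order-$p$ subgroup $G^{u}$ for $1<u\leq 3$ is $\Gal(M(z)/L(z+y))\neq\Gal(M(z)/M)$, and the break of $M(z)/M$ is $1$, not $3$. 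The lemma's actual assertion in this regime --- that the break is at most $l$ --- still holds (the largest upper break of $M(z)/L$ is $\max_i\{-\df_L(\alpha+i\beta)\}\leq l$, and $\Gal(M(z)/M)$ meets the lower filtration only up to the \emph{smaller} lower break unless there is genuinely a single break), but not by the argument you give. Since this subcase is not load-bearing in the paper, the fix is short, but as written your proof of it is wrong.
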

\begin{proof} Suppose $l<a$.
    Since $M(z)/L$ is a totally ramified $C_p^2$-extension with upper
    ramification breaks $l<a$, the lower ramification breaks are
    $l_1=l$ and $l_2=l+p(a-l)$.  Passing to the ramification
    filtration of $\Gal(M(z)/M)$ yields the result. More generally, when
    $M(z)/M$ is nontrivial, the ramification break of
    $M(z)/M$ satisfies $\max\{a,pa-(p-1)l\}$ with equality when $a\neq l$.
  \end{proof}

We may express $x_3=\bar{x}_3+z_5$ where $\wp(z_5)=\kappa_3$ and
\begin{equation}\label{x_3}
\bar{x}_3=\begin{cases}
 -z_0  &\gH,\\
z_1& \gD,\gM, ( u_2=u_1\mbox{ or }G_{l_2}=\langle \sigma_1^p,\sigma_2\rangle),\\
z_0+z_2
& \gD,\gM,u_2\neq u_1\mbox{ and }G_{l_2}=\langle \sigma_1\rangle,\\
z_3+z_4& \gQ.
\end{cases}
\end{equation}
Letting $s_i$ denote the ramification break for $M(z_i)/M$ we find,
based upon Lemma \ref{C_p^2-breaks}, that $s_i=pt_i-(p-1)l_2$ for
$i=0,1,3$, $s_2=pt_2-(p-1)u_1$ and of course, $s_4=t_4$.

At this point, we have collected all the information we need to
determine the ramification break $\bar{l}_3$ of $M(\bar{x}_3)/M$ where $\bar{x}_3$ is
expressed as in \eqref{x_3}.
Consider the cases when $G\cong \gH$,
or $G\cong \gD,\gM$ and $G_{l_2}=\langle \sigma_1^p,\sigma_2\rangle$.
Thus $\bar{l}_3=s_i$ for $i=0,1$. The upper ramification numbers for $M(\bar{x}_3)/K$ are $u_1,u_2,\bar{u}_3$ where
$\bar{u}_3-u_2=(\bar{l}_3-l_2)/p^2=(t_i-l_2)/p$. Separating $\gD$ off from $\gM$ for clarity, this establishes the fact that
\begin{equation}\label{bar{u}-1}
\bar{u}_3=\begin{cases}
\max\left\{s+u_1,r+\frac{u_1}{p}\right\} & \gH,\\
u_2+u_1 & \gD, (u_2=u_1\mbox{ or }G_{l_2}=\langle \sigma_1^p,\sigma_2\rangle),\\
\max\left\{pu_1,s+u_1,r+\frac{u_1}{p}\right\} &\gM, (u_2=u_1\mbox{ or }G_{l_2}=\langle \sigma_1^p,\sigma_2\rangle),\\
&\hspace*{1.5cm}\mbox{ and }\mu_{p-1}\neq 1\bmod \caM_K\\
\max\left\{(p-1)u_1+\frac{u_1}{p},t+u_1,r+\frac{u_1}{p}\right\} &
\gM, (u_2=u_1\mbox{ or }G_{l_2}=\langle \sigma_1^p,\sigma_2\rangle),\\
&\hspace*{1.5cm}\mbox{ and } \mu_{p-1}= 1\bmod \caM_K.\\
\end{cases}\end{equation}
Now consider the case where
 $G\cong \gQ$, or
$G\cong \gD,\gM$ and $u_2\neq u_1$, $G_{l_2}=\langle \sigma_1\rangle$. In these cases, $\bar{x}_3$ is the sum of two terms. Indeed, is an element of $M(z_0,z_2)$ or $M(z_4,z_5)$.
\begin{lemma}
  $M(z_0,z_2)$ is a $C_p^2$-extension with two upper ramification breaks $s_0<s_2$. If $u_2\neq u_1$, 
  then $M(z_3,z_4)$ is $C_p^2$-extension with
  upper ramification breaks then $s_3<s_4$.
  Otherwise, if $\eus_1\neq 0$, the upper ramification breaks are
  $s_4 <s_3$.
\end{lemma}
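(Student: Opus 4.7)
The plan is to handle both claims in parallel via the same two-step strategy. In each case, I will first show that the two named $C_p$-subextensions of $M$ are distinct, which forces the compositum to have degree $p^2$ over $M$ and be Galois with group $C_p^2$ (as the compositum of two abelian exponent-$p$ extensions is automatically abelian of exponent $p$). I will then identify the two upper ramification breaks by invoking a standard structural fact: in a totally ramified $C_p^2$-extension $N'/M$ with upper breaks $u_1' \leq u_2'$, exactly one of the $p+1$ intermediate $C_p$-subextensions has break $u_1'$ and the other $p$ have break $u_2'$. Hence two intermediate subextensions with distinct breaks $b<B$ must realize $b = u_1'$ and $B = u_2'$.

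For $M(z_0,z_2)$, I will substitute $l_2 = pu_2 - (p-1)u_1$, $t_2 = (p^2-p+1)u_2$, and the definitions of $s_0, s_2$ to obtain
\[
s_2 - s_0 \;=\; p\bigl(p^2 u_2 - t_0 - (p-1)u_1\bigr),
\]
and then verify that the right-hand side is positive. Since $t_0 = \max\{ps+u_1,\,pr-(p-2)u_1\}$ with $r,s \leq u_2$, and since in the applicable case $u_1 < u_2$ (where $G_{l_2} = \langle\sigma_1\rangle$), the elementary estimate $(p-1)u_2 > u_1$ will suffice to conclude $s_0 < s_2$; distinctness of $M(z_0)$ and $M(z_2)$ is then immediate, and the structural fact finishes.

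For $M(z_3,z_4)$ (the $\gQ$ case, $p=2$), I will split on whether $u_2 \neq u_1$ or $u_2 = u_1$. When $u_2 \neq u_1$, the values $l_2 = 2u_2 - u_1$, $t_3 = 3u_2$, $t_4 = 6u_2 - u_1$ yield $s_3 = 4u_2 + u_1$ and $s_4 = 6u_2 - u_1$, so $s_4 - s_3 = 2(u_2 - u_1) > 0$. When $u_2 = u_1$ (so $l_2 = u_1$ and $s_4 = 3u_1$), I will treat the subcases of Proposition~\ref{Q8-s1-s2} separately: for $\omega^3 \neq 1$, $t_3 = 3u_1$ yields $s_3 = 5u_1$; for $\omega^3 = 1$ (so $0 < e < u_1/2$), $t_3 = 3u_1 - 4e$ yields $s_3 = 5u_1 - 8e$. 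That $s_3 \neq s_4$ in every subcase will come from $u_1$ being odd (so $u_1 \neq 4e$), ensuring the compositum is genuinely $C_p^2$.

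The main obstacle is the $\omega^3 = 1$ subcase when $u_2 = u_1$: the relative ordering of $s_3$ and $s_4$ there depends on whether $e < u_1/4$ or $e > u_1/4$, so the lemma's assertion ``$s_4 < s_3$'' should be read as identifying the pair of distinct upper breaks (with the ordering that arises in the typical regime). The structural fact will extract both breaks regardless of the ordering, which is what the downstream computation of $\bar{u}_3 = u_1 + (\max(s_3,s_4) - u_1)/p^2$ actually requires.
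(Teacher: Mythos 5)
Your proposal is correct and follows essentially the same computational route as the paper's proof: compute the breaks from $s_i=pt_i-(p-1)l_2$ for $i=0,1,3$, $s_2=pt_2-(p-1)u_1$, $s_4=t_4$, and compare. Your verification of $s_0<s_2$ via $t_0\leq pu_2+u_1$ (from $r,s\leq u_2$) together with $(p-1)u_2>u_1$ is exactly the paper's argument, and your computation $s_4-s_3=2(u_2-u_1)$ in the $u_2\neq u_1$ case matches as well. Where you genuinely improve on the paper is the $\gQ$, $u_2=u_1$, $\omega^3=1$ subcase. The paper's proof asserts that ``$s_4<s_3$ reduces to $e<u_1/2$,'' but as your computation shows, $s_3=2(3u_1-4e)-u_1=5u_1-8e$ while $s_4=3u_1$, so $s_4<s_3$ is equivalent to $e<u_1/4$; for $u_1/4<e<u_1/2$ (which is permitted, since Proposition \ref{Q8-s1-s2} only forces $0<e<u_1/2$ when $\eus_1\neq 0$ and $\omega^3=1$) the ordering is reversed and the lemma as stated fails. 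Your repair is the right one: what matters downstream is only that $s_3\neq s_4$ (true because $u_1$ is odd, so $u_1\neq 4e$) together with the structural fact that in a $C_p^2$-extension with distinct upper breaks exactly one intermediate $C_p$-field carries the smaller break, so $M(z_3+z_4)$, being distinct from both $M(z_3)$ and $M(z_4)$, carries $\max\{s_3,s_4\}$. This is consistent with the final formula $\max\left\{\frac{3u_1}{2},2u_1-2e\right\}$ in Theorem \ref{sharp-bound}, which already takes the maximum and therefore returns the correct value in both regimes; only the intermediate assertion ``$s_4<s_3$'' and the sentence in the paper deducing that the break of $M(\bar{x}_3)/M$ ``is $s_3$'' when $u_2=u_1$ need to be weakened to ``is $\max\{s_3,s_4\}$.''
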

\begin{proof}
  Verify that $s_2>s_0$ follows from the fact that $pu_2+u_1\geq t_0$ and $u_2>u_1$. For $u_2\neq u_1$, $s_3<s_4$ reduces to $u_1<u_2$. For
  $u_2= u_1$
and $\eus_1\neq 0$, $s_4<s_3$ reduces to $e<u_1/2$.
\end{proof}

We use this lemma for the case when $G\cong \gD,\gM$, $u_2\neq u_1$
and $G_{l_2}=\langle \sigma_1\rangle$, and also when $G\cong \gQ$.
The ramification break of $M(\bar{x}_3)/M$ where $\bar{x}_3=z_0+z_2$
is $s_2$, which is also the third lower ramification break for
$M(\bar{x}_3)/K$.  The ramification break of $M(\bar{x}_3)/M$ where
$\bar{x}_3=z_3+z_4$ is $s_4$ if $u_2\neq u_1$. On the other hand, when
$u_2=u_1$ and $\eus_1\neq 0$, it is $s_3$. And if $u_2=u_1$ and
$\eus_1= 0$ (so $M(z_3,z_4)$ is a $C_p$-extension), it is $s_4$. These
are also the third lower ramification breaks for $M(\bar{x}_3)/K$.
Thus we determine that:
\begin{equation}\label{bar{u}-2}
\bar{u}_3=\begin{cases}
pu_2&\gD,\gM, u_2\neq u_1\mbox{ and }G_{l_2}=\langle \sigma_1\rangle,\\
2u_2 & \gQ, u_2\neq u_1,\mbox{ or }u_2= u_1,\omega^3\neq 1,\\
\max\left\{\frac{3u_1}{2},2u_1-2e\right\} & \gQ, u_2=u_1, \omega^3= 1.
\end{cases}\end{equation}

It is an easy exercise to determine from \eqref{bar{u}-1} and
\eqref{bar{u}-2} that either $\bar{u}_3$ is not an integer (because
$p\nmid u_1$) or $\bar{u}_3$ is an integer congruent to zero modulo
$p$.  Now recall that regardless of the Galois group, $N=M(x_3)$ with
$x_3=\bar{x}_3+z_5$ where $\wp(z_5)=\kappa_3$. Without loss of
generality, $\kappa_3$ is $K$-reduced. This means that unless
$\kappa_3\in\euO_K$, $v_K(\kappa_3)=-b_3<0$ with $p\nmid b_3$, in
which case $b_3$ is the ramification break of $K(z_5)/K$ and an upper
ramification break for the Galois extension $M(\bar{x}_3,z_5)/K$,
which contains $M(x_3)/K$. The upper ramification breaks of
$M(\bar{x}_3,z_5)/K$ include $u_1\leq u_2<\bar{u}_3$ as well.  And, since
$\bar{u}_3$ and $b_3$ are different types of rational numbers (never equal),
the upper ramification breaks of $M(x_3)/K$ are $u_1\leq u_2<\max\{\bar{u}_3,b_3\}$. This together with the expressions in \eqref{bar{u}-1} and
\eqref{bar{u}-2} appear in 
Theorem \ref{sharp-bound}.

\section{Application}\label{notting}

As explained in \cite{nottingham}, there is interest in explicit
constructions of finite nonabelian subgroups of the Nottingham group,
The authors describe a process that uses the Witt vector description
of cyclic extensions of degree $p^n$ in characteristic $p$ to produce
elements of order $p^n$. It would be interesting to follow this
process and use the Artin-Schreier descriptions in Theorem
\ref{as-gen} to identify in the Nottingham group some nonabelian subgroups of
order $p^3$. Furthermore, one might then use Theorem \ref{sharp-bound}
to determine the upper ramification sequences for these subgroups and
thus address one of the open problems listed in
\cite[\S1.5]{nottingham}.

\bibliographystyle{amsalpha}

\bibliography{bib}

\providecommand{\bysame}{\leavevmode\hbox to3em{\hrulefill}\thinspace}
\providecommand{\MR}{\relax\ifhmode\unskip\space\fi MR }
\providecommand{\MRhref}[2]{%
  \href{http://www.ams.org/mathscinet-getitem?mr=#1}{#2}
}
\providecommand{\href}[2]{#2}
\begin{thebibliography}{Wym69}

\bibitem[BCT22]{nottingham}
Jakub Byszewski, Gunther Cornelissen, and Djurre Tijsma, \emph{Automata and
  finite order elements in the {N}ottingham group}, J. Algebra \textbf{602}
  (2022), 484--554.

\bibitem[EH07]{elder:hooper}
G.~Griffith Elder and Jeffrey~J. Hooper, \emph{On wild ramification in
  quaternion extensions}, J. Th\'{e}or. Nombres Bordeaux \textbf{19} (2007),
  no.~1, 101--124. \MR{2332056}

\bibitem[Lar76]{larson}
Richard~Gustavus Larson, \emph{Hopf algebra orders determined by group
  valuations}, J. Algebra \textbf{38} (1976), no.~2, 414--452.

\bibitem[Mik81]{miki}
Hiroo Miki, \emph{On the ramification numbers of cyclic {$p$}-extensions over
  local fields}, J. Reine Angew. Math. \textbf{328} (1981), 99--115.

\bibitem[Sal78]{saltman}
David~J. Saltman, \emph{Noncrossed product $p$-algebras and {G}alois
  $p$-extensions}, J. Algebra \textbf{52} (1978), no.~2, 302--314.

\bibitem[Ser79]{serre:local}
Jean-Pierre Serre, \emph{Local fields}, Graduate Texts in Mathematics, vol.~67,
  Springer-Verlag, New York, 1979, Translated from the French by Marvin Jay
  Greenberg.

\bibitem[Tho05]{thomas}
Lara Thomas, \emph{Ramification groups in {A}rtin-{S}chreier-{W}itt
  extensions}, J. Th\'{e}or. Nombres Bordeaux \textbf{17} (2005), no.~2,
  689--720. \MR{2211314}

\bibitem[Wym69]{wyman}
B.~Wyman, \emph{{Wildly ramified gamma extensions}}, Am. J. Math. \textbf{91}
  (1969), 135--152.

\end{thebibliography}

\end{document}